\newtheorem{lemma}{Lemma}
\newtheorem{theorem}{Theorem}
\newtheorem{proposition}{Proposition}
\newtheorem{corollary}{Corollary}
\newcommand{\Z}{ \mathbb{Z} }
\journal{Statistics and Probability Letters}
\begin{document}

\begin{frontmatter}



\title{A note on strong-consistency of   componentwise ARH(1) predictors}


\author{M. D. Ruiz-Medina  and J. \'Alvarez-Li\'ebana}

\address{Department of Statistics and Operation
Research (mruiz@ugr.es,  javialvaliebana@ugr.es)\\  Faculty of Sciences,  University of Granada\\
Campus Fuente Nueva s/n\\18071 Granada, Spain}


\begin{abstract}
New results on strong-consistency in the  trace operator norm are obtained, in the parameter estimation of  an autoregressive Hilbertian process of order one (ARH(1) process). Additionally, a strongly-consistent  diagonal componentwise estimator of  the autocorrelation operator is derived, based on its empirical singular value decomposition.
\end{abstract}

\begin{keyword}
Dimension reduction techniques  \sep empirical orthogonal bases \sep functional prediction \sep   
 strong-consistency \sep trace norm

\vspace*{0.5cm}



\noindent \textit{2010 Mathematics Subject Classification}
Primary 60G10; 60G15. Secondary 60F99; 60J05; 65F15
\end{keyword}

\end{frontmatter}


\section{Introduction.}
\label{sec:1}
 
There exists an extensive literature on Functional Data Analysis (FDA) techniques. In the past few years, the primary focus
of FDA was mainly on independent and identically distributed  (i.i.d.)  functional observations.  
The classical  book by Ramsay and Silverman \cite{RamsaySilverman05}  provides a wide overview on FDA techniques (e.g., regression, principal components analysis, linear modeling,  canonical correlation analysis, curve registration, and principal differential analysis, etc). An introduction to  nonparametric statistical approaches for FDA
can be found in Ferraty and  Vieu \cite{Ferraty06}. 
 We also refer to the recent  monograph by  Hsing and Eubank \cite{HsingEubank15}, where the usual functional analytical tools  in FDA are introduced,  addressing several statistical and estimation problems for  random elements in function spaces.  
Special attention is paid to the monograph  by Horv\'ath and  Kokoszka \cite{HorvathandKokoszka} covering functional inference based on second order statistics. 

We  refer  the reader to the methodological  survey paper by
Cuevas  \cite{Cuevas14},  covering nonparametric techniques and  discussing central topics in FDA. Recent advances on statistics in high/infinite  dimensional spaces are collected in the IWFOS'14 Special Issue published in the Journal of Multivariate Analysis (see  Goia and Vieu  \cite{GoiaVieu16} who summarized its contributions, providing a brief discussion on the current literature).

 A central issue in FDA is to take into account the temporal dependence of the observations. Although the literature on scalar and vector time series is huge,
there are relatively few contributions dealing with functional time series, and, in general, with dependent functional data.  For instance, Part III (Chapters 13--18) of the monograph by  
Horv\'ath and  Kokoszka \cite{HorvathandKokoszka} is devoted to
this issue, including  topics related to functional  time series (in particular, the functional autoregressive model), and the statistical analysis of spatially distributed functional data.
The moment-based notion of weak dependence introduced in 
H\"ormann and Kokoszka \cite{HormannKokoszka10} is also  accommodated to the statistical analysis of functional time series. This notion does not require the specification of a data model, and  
can be used to study the properties of many nonlinear sequences (see e.g.,
H\"ormann \cite{H2008}; Berkes et al. \cite{Berkes}, for recent applications).

This paper adopts the methodological approach presented in
Bosq \cite{Bosq00} for functional time series.
 That monograph  studies the theory of linear functional time series,
both in Hilbert and Banach spaces, focusing on the functional autoregressive model.  Several authors have studied the asymptotic properties of componentwise estimators of the autocorrelation operator of an ARH(1) process, and of the associated plug-in predictors.   We refer to \cite{Guillas01,Mas99,Mas04,Mas07}, where the efficiency, consistency and asymptotic normality of these estimators are addressed, in a parametric framework (see also 
\'Alvarez-Li\'ebana, Bosq and  Ruiz-Medina  \cite{Alvarez1600}, on estimation of the Ornstein-Uhlenbeck processes in  Banach spaces, and    \cite{Alvarez16}, on weak consistency in the   Hilbert-Schmidt operator norm of componentwise estimators). Particularly, strong-consistency in the  norm of the space of bounded linear operators was derived in \cite{Bosq00}. In the derivation of these results, the autocorrelation operator is usually assumed to be a Hilbert-Schmidt operator,  when the eigenvectors of the autocovariance operator are unknown.   This paper proves that, under basically the same setting of conditions as in the cited papers, the  componentwise estimator  of the autocorrelation operator proposed in  \cite{Bosq00}, based on  the   empirical eigenvectors of the autocovariance operator, is also strongly-consistent in the  
 Hilbert-Schmidt and trace operator norms.

 The dimension reduction problem  constitutes also a central topic in the parametric, nonparametric and semiparametric FDA statistical frameworks. Special attention to this topic has been paid, for instance,  in the context of functional regression with functional response and functional predictors  (see, for example, Ferraty et al. \cite{FerratyKeilegom12}, where asymptotic
normality is derived, and, Ferraty et al. \cite{Ferraty02}, in the functional time series framework). In the semiparametric and nonparametric estimation techniques, a kernel-based formulation is usually adopted. Real-valued covariates were incorporated in the novel semiparametric kernel-based proposal by Aneiros-P\'erez and  Vieu \cite{Aneiros08}, providing  an extension to the functional partial linear time series framework (see also   Aneiros-P\'erez and  Vieu \cite{AneirosVieu06}). 
Motivated by spectrometry applications, a two-terms Partitioned Functional Single Index Model is introduced in Goia and  Vieu
\cite{GoiaVieu15}, in a semiparametric framework. In the ARH(1) process framework, the present  paper  provides 
 a new diagonal componentwise estimator of the autocorrelation operator, based on its empirical  singular value decomposition. Its strong-consistency is proved as well. The diagonal  design leads to an important dimension reduction, going beyond the usual isotropic restriction on the kernels involved in the approximation of the regression operator (respectively, autocorrelation operator), in the nonparametric framework.  Recently,  Petrovich and Reimherr  \cite{PetrovichReimherr17} address the dimension reduction provided by the functional principal component projections in the general case when eigenvalues can be repeated, instead of the classical assumptions that their multiplicity should be one.
    
The outline of the paper is the following. Section \ref{sec:2} introduces basic definitions and preliminary results. Section \ref{sec:4} derives strong-consistency of the estimator introduced in Bosq \cite{Bosq00}, in the  trace  norm. Section \ref{dce} formulates a 
strongly-consistent diagonal componentwise estimator of the autocorrelation operator. Proofs of the results  are given in Section \ref{secproof}.
     
\section{Preliminaries.}
\label{sec:2}
 Let  $H$ be a real separable Hilbert space, and let  $X = \left\lbrace X_n,\ n \in \mathbb{Z} \right\rbrace $
 be  a zero-mean ARH(1) process on the probability space $(\Omega,\mathcal{A},P),$ satisfying:
\begin{equation}
X_{n } = \rho \left(X_{n-1} \right) + \varepsilon_{n},\ n \in
\Z,\label{24bb}
\end{equation}
\noindent  where $\rho \in \mathcal{L}(H),$ with $\mathcal{L}(H)$ being the space of bounded linear operators, with the uniform norm $\left\| \mathcal{A} \right\|_{\mathcal{L}(H)}=\sup_{f\in H; \ \|f\|_{H}\leq 1}\mathcal{A}(f),$
 for every $\mathcal{A}\in \mathcal{L}(H).$
 In our case, $\rho \in \mathcal{L}(H)$ satisfies $\Vert \rho^{k}
\Vert_{\mathcal{L}\left(H\right)}<1,$  for $k\geq k_{0},$ and for some
 $k_{0},$ where $\rho^{k}$ denotes the $k$th power of $\rho,$ i.e., the composition operator $\rho\underset{k}{\dots}\rho.$  The $H$-valued
innovation process $\varepsilon= \left \lbrace \varepsilon_{n}, \ n\in \mathbb{Z}\right\rbrace$ is assumed to be a strong white noise, and to be  uncorrelated with the random
initial condition.  $X$ then  admits the MAH($\infty$) representation  $X_n = \displaystyle \sum_{k=0}^{\infty} \rho^{k} \left( \varepsilon_{n-k} \right),$ for $n \in \mathbb{Z},$ 
  providing   the unique stationary solution to equation (\ref{24bb}) (see
 \cite{Bosq00}).

The trace autocovariance operator  of  $X$ is given by $C_{X}={\rm E}[X_{n}\otimes X_{n}]=
{\rm E}[X_{0}\otimes X_{0}],$ for $n\in \mathbb{Z},$
 and its empirical version $\mathcal{C}_{n}$ is defined as
\begin{equation}
\mathcal{C}_{n} = \frac{1}{n} \displaystyle \sum_{i=0}^{n-1} X_i \otimes X_i,\quad n \geq 2,  \label{66aa}
\end{equation}
\noindent  where, for  $f\in H,$ and  $i,j\in \mathbb{N},$ the random operator $X_{i} \otimes X_{j}$ is given by  $\left(X_{i} \otimes X_{j} \right) (f)= \langle X_{i},f \rangle_H X_{j}.$ 
 In the following,   $\left\lbrace C_j, \ j \geq 1 \right\rbrace$ and $\{\phi_{j},\ j\geq 1\}$  denote  the respective sequence of eigenvalues and eigenvectors  of the autocovariance operator $C_{X},$ satisfying
$C_{X}(\phi_{j})=C_{j}\phi_{j},$ for  $j\geq 1.$ 
Also, by $\{C_{n,j}, \ j \geq 1\}$ and $\{\phi_{n,j},\ j\geq 1\}$ we respectively   denote the empirical eigenvalues and  eigenvectors  of $\mathcal{C}_{n}$    (see \cite{Bosq00}, pp. 102--103), 
 \begin{eqnarray}
&&\hspace*{-1.2cm}\mathcal{C}_{n}\phi_{n,j}= C_{n,j}\phi_{n,j},\ j\geq 1,\quad C_{n,1}\geq \dots\geq C_{n,n}\geq 0=C_{n,n+1}=C_{n,n+2}\dots  \label{ev1}
\end{eqnarray}
\noindent 
Consider now   the nuclear  cross-covariance operator 
$D_{X}={\rm E}[X_{i}\otimes X_{i+1}]={\rm E}[X_{0}\otimes X_{1}],$  $i\in\mathbb{Z},$  and   its empirical version $\mathcal{D}_n = \frac{1}{n-1}\displaystyle \sum_{i=0}^{n-2} X_i \otimes X_{i+1},$   $n \geq 2.$ 

The following assumption will appear in the subsequent development.

\medskip

\noindent  \textbf{Assumption A1.} The   random initial condition  $X_{0}$ of  $X$ in (\ref{24bb}) satisfies
$ \left\| X_0 \right\|_{H}  < M,\quad a.s.,$  for some $M.$
Here, a.s. denotes almost surely. 

\begin{theorem}
\label{theorem2} (see Theorem 4.1 on pp. 98--99, Corollary 4.1 on pp. 100--101 and  Theorem 4.8 on pp. 116--117, in \cite{Bosq00}).
 If ${\rm E} \left[ \left\| X_0 \right\|_{H}^{4} \right] < \infty,$ for any $\beta > \frac{1}{2},$ as $n\rightarrow \infty,$
\begin{equation}
\frac{n^{1/4}}{\left(\ln(n) \right)^{\beta}}  \left\| \mathcal{C}_{n} - C_{X} \right\|_{\mathcal{S}(H)} \to^{a.s.}0, \quad \frac{n^{1/4}}{\left(\ln(n) \right)^{\beta}} \left\| \mathcal{D}_n - D_{X} \right\|_{\mathcal{S}(H)} \to^{a.s.}0,
\end{equation}
\noindent where  $\to^{a.s.}$ means almost surely convergence.
Under \textbf{Assumption A1}, 
\begin{eqnarray}
& &\left\| \mathcal{C}_{n} - C_{X} \right\|_{\mathcal{S}(H)} = \mathcal{O} \left(\left(\frac{\ln(n) }{n} \right)^{1/2} \right)~a.s.,
\nonumber\\
& &\left\| \mathcal{D}_n - D_{X} \right\|_{\mathcal{S}(H)} = \mathcal{O} \left(\left(\frac{\ln(n) }{n} \right)^{1/2} \right)~a.s.,
\end{eqnarray}
\noindent where $\left\| \cdot \right\|_{\mathcal{S}(H)}$ is the Hilbert-Schmidt operator norm. \end{theorem}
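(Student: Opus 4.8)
The plan is to treat each empirical operator as an average of centered, stationary, weakly dependent $\mathcal{S}(H)$-valued random variables and to transfer to them strong laws with rates; since the statement is quoted verbatim from Bosq \cite{Bosq00}, the cleanest formal route is to verify that the hypotheses of the cited theorems hold and invoke them, but I describe the underlying mechanism. First I would exploit the MAH($\infty$) representation $X_n=\sum_{k\ge 0}\rho^k(\varepsilon_{n-k})$: under the condition $\|\rho^k\|_{\mathcal{L}(H)}<1$ for $k\ge k_0$, the sequence $\{X_n\}$ is stationary, ergodic, and its dependence decays geometrically, since the contribution of innovations older than lag $m$ is controlled by $\sum_{k\ge m}\|\rho^k\|_{\mathcal{L}(H)}$. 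Writing $\mathcal{C}_n-C_X=\tfrac1n\sum_{i=0}^{n-1}Z_i$ with $Z_i:=X_i\otimes X_i-C_X$, the summands are centered elements of $\mathcal{S}(H)$ inheriting this weak dependence; the identity $\|X_i\otimes X_i\|_{\mathcal{S}(H)}=\|X_i\|_H^2$ gives $\mathrm{E}\|Z_i\|_{\mathcal{S}(H)}^2\le \mathrm{E}\|X_0\|_H^4<\infty$ under the fourth-moment hypothesis, and the same decomposition with $Z_i':=X_i\otimes X_{i+1}-D_X$ (finite second moment by Cauchy--Schwarz) handles $\mathcal{D}_n-D_X$ in parallel.

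For the first pair of rates I would summing the geometrically decaying cross-covariances of the $Z_i$ to obtain the variance bound $\mathrm{E}\|\mathcal{C}_n-C_X\|_{\mathcal{S}(H)}^2=\mathcal{O}(1/n)$. Then, along a geometric subsequence $n_k\sim\theta^k$ with $\theta>1$, Markov's inequality gives $P\big(\|\mathcal{C}_{n_k}-C_X\|_{\mathcal{S}(H)}>n_k^{-1/4}(\ln n_k)^\beta\big)=\mathcal{O}\big(n_k^{-1/2}(\ln n_k)^{-2\beta}\big)$, which is summable for every $\beta>\tfrac12$; Borel--Cantelli plus a maximal inequality to interpolate between consecutive blocks yields the $o\big(n^{-1/4}(\ln n)^\beta\big)$ rate. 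For the sharper $\mathcal{O}\big((\ln n/n)^{1/2}\big)$ rate under Assumption A1 I would instead use the a.s. bound $\|X_i\|_H<M$, so that $\|Z_i\|_{\mathcal{S}(H)}\le 2M^2$, and feed this boundedness into an exponential (Bernstein-type) inequality valid for bounded, geometrically weakly dependent $\mathcal{S}(H)$-valued variables; the exponential tail makes $\sum_n P\big(\|\mathcal{C}_n-C_X\|_{\mathcal{S}(H)}>c(\ln n/n)^{1/2}\big)$ summable for $c$ large, and Borel--Cantelli closes the argument, the identical scheme applying to $\mathcal{D}_n$.

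The main obstacle is the dependence: because $\{X_i\}$ is genuinely correlated rather than i.i.d., I cannot quote classical strong laws and must either establish a geometric mixing rate or effect a martingale approximation of $\sum_i Z_i$ so that a Hilbert-space martingale exponential inequality applies. The delicate quantitative point is verifying that the cross-covariance sums of the \emph{quadratic} functionals $Z_i$ (not merely of the $X_i$) remain summable, so that the variance is genuinely $\mathcal{O}(1/n)$; once this uniform control is in hand, the subsequence-plus-maximal-inequality and exponential-inequality steps are routine. Since all of this is carried out in Bosq \cite{Bosq00} (Theorem 4.1 and Corollary 4.1 for the fourth-moment rates, Theorem 4.8 for the rate under Assumption A1), the proof reduces to checking that the present hypotheses coincide with those cited.
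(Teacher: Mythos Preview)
The paper does not prove this theorem at all: it is stated in the Preliminaries section as a quoted result from Bosq \cite{Bosq00}, with the explicit pointer ``see Theorem 4.1 on pp.~98--99, Corollary 4.1 on pp.~100--101 and Theorem 4.8 on pp.~116--117, in \cite{Bosq00}'', and no argument is supplied anywhere in Section~\ref{secproof}. Your proposal therefore matches the paper's treatment in the only way that matters---you too ultimately defer to the cited theorems in \cite{Bosq00}---and the heuristic sketch you add (MAH($\infty$) weak dependence, $\mathcal{O}(1/n)$ variance via summable cross-covariances, subsequence plus Borel--Cantelli for the $n^{-1/4}(\ln n)^{\beta}$ rate, exponential inequality under the a.s.\ bound for the $(\ln n/n)^{1/2}$ rate) is extra commentary not present in the paper.
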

 Let $k_n$ be a truncation parameter such that 
$\lim_{n \to \infty} k_n = \infty,$ $\frac{k_n}{n} < 1,$ and 
\begin{equation}
\Lambda_{k_n}=\sup_{1\leq j\leq k_n}(C_{j}-C_{j+1})^{-1}.\label{uee}
 \end{equation}
\section{Strong-consistency  in the  trace operator norm}
\label{sec:4}
This section derives the strong-consistency of the componentwise estimator $\widetilde{\rho}_{k_{n}}$ (see equation (\ref{140}) below), in the trace norm, which also implies its strong-consistency in the Hilbert-Schmidt operator norm.  As it is well-known,  
 for a trace operator $\mathcal{K}$ on $H,$ its trace  norm $\|\mathcal{K}\|_{1}$  is finite,  and, for an orthonormal basis $\{\varphi_{n},\ n\geq 1\}$ of $H,$  such a norm  is  given by 
\begin{equation}
\|\mathcal{K}\|_{1}=\sum_{n=1}^{\infty }\left\langle \sqrt{\mathcal{K}^{\star}\mathcal{K}}(\varphi_{n} ),\varphi_{n}\right\rangle_{H}.\label{tn}
\end{equation}

 In  Theorem \ref{proposition5} below,  the following lemma will be applied:
\begin{lemma}
\label{lemeera} Under \textbf{Assumption A1},  if, as $n\to \infty,$ $k_{n}\Lambda_{k_n}=o\left(\sqrt{\frac{n}{\ln(n)}}\right),$ 
\begin{equation}\sup_{x\in H,\ \|x\|_{H}\leq 1}\left\|\rho(x)-\sum_{j=1}^{k_{n}}\left\langle \rho(x),\phi_{n,j}\right\rangle_{H}\phi_{n,j}\right\|_{H}\to_{a.s.} 0,\quad n\to \infty. \label{eqasconveee}
\end{equation}

\end{lemma}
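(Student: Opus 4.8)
The plan is to recognise that the supremum in (\ref{eqasconveee}) is nothing but a uniform operator norm. Indeed, the inner sum is the orthogonal projection of $\rho(x)$ onto the span of the first $k_{n}$ empirical eigenvectors, so that, writing $\widehat{\Pi}_{k_{n}}=\sum_{j=1}^{k_{n}}\phi_{n,j}\otimes\phi_{n,j}$ for that projector, the quantity in (\ref{eqasconveee}) equals $\left\|(I-\widehat{\Pi}_{k_{n}})\rho\right\|_{\mathcal{L}(H)}$. I would then introduce the companion projector onto the \emph{true} eigenvectors, $\Pi_{k_{n}}=\sum_{j=1}^{k_{n}}\phi_{j}\otimes\phi_{j}$, and split by the triangle inequality into a deterministic truncation bias and a random projector-estimation error:
\[
\left\|(I-\widehat{\Pi}_{k_{n}})\rho\right\|_{\mathcal{L}(H)}\leq \left\|(I-\Pi_{k_{n}})\rho\right\|_{\mathcal{L}(H)}+\left\|\Pi_{k_{n}}-\widehat{\Pi}_{k_{n}}\right\|_{\mathcal{L}(H)}\left\|\rho\right\|_{\mathcal{L}(H)}.
\]

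For the first term there is no probabilistic content. Since $\rho$ is compact (being Hilbert--Schmidt in the present setting) and $\{\phi_{j}\}$ is a complete orthonormal system of $H$, the projectors $\Pi_{k_{n}}$ converge strongly to the identity as $k_{n}\to\infty$; as $\rho$ maps the unit ball into a relatively compact set, this strong convergence is uniform there, whence $\left\|(I-\Pi_{k_{n}})\rho\right\|_{\mathcal{L}(H)}\to 0$ deterministically.

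The core of the argument is the second term, which I would control eigenvector by eigenvector. After the usual sign normalisation $\phi_{j}'=\mathrm{sgn}\langle\phi_{n,j},\phi_{j}\rangle_{H}\,\phi_{j}$ (which leaves $\phi_{j}\otimes\phi_{j}$ unchanged), the decomposition $\phi_{n,j}\otimes\phi_{n,j}-\phi_{j}'\otimes\phi_{j}'=\phi_{n,j}\otimes(\phi_{n,j}-\phi_{j}')+(\phi_{n,j}-\phi_{j}')\otimes\phi_{j}'$ shows each summand has uniform norm at most $2\|\phi_{n,j}-\phi_{j}'\|_{H}$. Invoking the classical eigenvector perturbation bound $\|\phi_{n,j}-\phi_{j}'\|_{H}\leq a_{j}\left\|\mathcal{C}_{n}-C_{X}\right\|_{\mathcal{L}(H)}$ (Lemma 4.3 in \cite{Bosq00}), and noting that for $1\leq j\leq k_{n}$ the gaps $(C_{j-1}-C_{j})^{-1}$ and $(C_{j}-C_{j+1})^{-1}$ entering $a_{j}$ are both dominated by $\Lambda_{k_{n}}$, summation over $j$ yields
\[
\left\|\Pi_{k_{n}}-\widehat{\Pi}_{k_{n}}\right\|_{\mathcal{L}(H)}\leq \sum_{j=1}^{k_{n}}2\,\|\phi_{n,j}-\phi_{j}'\|_{H}\leq C\,k_{n}\Lambda_{k_{n}}\,\left\|\mathcal{C}_{n}-C_{X}\right\|_{\mathcal{L}(H)},
\]
for an absolute constant $C$.

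It then remains to feed in the almost-sure rate. Because $\|\cdot\|_{\mathcal{L}(H)}\leq\|\cdot\|_{\mathcal{S}(H)}$, Theorem \ref{theorem2} gives $\left\|\mathcal{C}_{n}-C_{X}\right\|_{\mathcal{L}(H)}=\mathcal{O}\big((\ln(n)/n)^{1/2}\big)$ a.s.\ under \textbf{Assumption A1}; combined with the hypothesis $k_{n}\Lambda_{k_{n}}=o\big(\sqrt{n/\ln(n)}\big)$ this makes the second term $\mathcal{O}\big(k_{n}\Lambda_{k_{n}}(\ln(n)/n)^{1/2}\big)=o(1)$ a.s. Both terms vanishing establishes (\ref{eqasconveee}). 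I expect the main obstacle to be exactly this second term: assembling the summed perturbation bound and verifying that the single factor $k_{n}\Lambda_{k_{n}}$ — and nothing worse — governs the whole projector difference, so that the stated condition on $k_{n}\Lambda_{k_{n}}$ is precisely what is needed to balance the $(\ln(n)/n)^{1/2}$ estimation rate.
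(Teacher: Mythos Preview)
Your argument is essentially the paper's own: split via the triangle inequality into a deterministic bias $\|(I-\Pi_{k_{n}})\rho\|_{\mathcal{L}(H)}$ and a projector-estimation error, bound the latter eigenvector-by-eigenvector through the perturbation inequality (Lemma~4.3/Corollary~4.3 in \cite{Bosq00}) to obtain the factor $k_{n}\Lambda_{k_{n}}\|\mathcal{C}_{n}-C_{X}\|$, and then invoke the almost-sure rate of Theorem~\ref{theorem2} under \textbf{Assumption~A1}. The only difference worth recording is that you treat the bias term explicitly by appealing to compactness of $\rho$, whereas the paper simply carries the tail sum $\sup_{x}\big\|\sum_{j>k_{n}}\langle\rho(x),\phi_{n,j}'\rangle_{H}\phi_{n,j}'\big\|_{H}$ through the final display without comment; note, however, that compactness (let alone the Hilbert--Schmidt property) of $\rho$ is not among the stated hypotheses of Lemma~\ref{lemeera}, so your justification there borrows from the ambient setting of Theorem~\ref{proposition5}(ii) rather than from the lemma itself.
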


The proof of this lemma is given in Section \ref{secproof}.

The following  condition is assumed  in the remainder of this section:

\medskip

\noindent \textbf{Assumption A2.} The empirical eigenvalue $C_{n,k_n} > 0~a.s,$ where $k_n$ denotes the truncation parameter  introduced in the previous section.

\medskip

Under \textbf{Assumption A2}, from the  observations of  $X_{0},\dots, X_{n-1},$ consider the   componentwise estimator $\widetilde{\rho}_{k_{n}}$ of $\rho$ (see (8.59)  p.218 in \cite{Bosq00})
\begin{eqnarray}
&&\widetilde{\rho}_{k_n}(x)\underset{H}{=} \widetilde{\pi}^{k_{n}}
\mathcal{D}_{n}[\mathcal{C}_{n}[\widetilde{\pi}^{k_{n}}]^{\star}]^{-1}(x)=\widetilde{\pi}^{k_{n}}
\mathcal{D}_{n}\widetilde{\mathcal{C}}_{n}^{-1}(x)\nonumber\\
&&\underset{H}{=} \displaystyle \sum_{j=1}^{k_n}\sum_{p=1}^{k_n}\left\langle \mathcal{D}_{n}\mathcal{C}_{n}^{-1}(\phi_{n,j}),\phi_{n,p}\right\rangle_{H} \phi_{n,p}\left\langle \phi_{n,j}, x\right\rangle_{H},\quad \forall x\in H,\label{140}
\end{eqnarray}
\noindent where 
$\widetilde{\mathcal{C}}_{n}^{-1}$  is  the inverse of the restriction of $\mathcal{C}_{n}$ to its principal eigenspace
of dimension $k_{n},$ which 
 is   bounded under \textbf{Assumption A2}. Here, $[\widetilde{\pi}^{k_{n}}]^{\star}$  denotes the projection operator into  $\overline{\mbox{Sp}}^{\|\cdot\|_{H}}\{\phi_{n,j};\ j=1,\dots k_{n}\}\subseteq H,$ the principal eigenspace
of dimension $k_{n},$ and  $\widetilde{\pi}^{k_{n}}$ is its adjoint or inverse.
\begin{theorem}
\label{proposition5}
Let $\rho \in \mathcal{L}(H)$ be the autocorrelation operator defined as before.
Assume  $\Lambda_{k_{n}}$
in  (\ref{uee}) satisfies  $\sqrt{k_{n}}\Lambda_{k_{n}}=o\left(\frac{n^{1/4}}{(\ln(n))^{\beta }} \right)$ as $n\rightarrow \infty ,$  for $\beta >1/2.$ 
Then, for $\widetilde{\rho}_{k_n}$ in (\ref{140}), the following assertions hold:

\noindent \textbf{(i)}  If ${\rm E} \left[ \left\| X_0 \right\|_{H}^{4} \right] < \infty,$ under \textbf{Assumption A2}, 
  \begin{equation}\|\widetilde{\rho}_{k_n}-\widetilde{\pi}^{k_{n}}\rho [\widetilde{\pi}^{k_{n}}]^{\star}\|_{1}\to^{a.s.} 0,\quad n\rightarrow \infty.\label{mreq1}
\end{equation}

\medskip

\noindent \textbf{(ii)}  Under \textbf{Assumptions A1-A2}, 
if $\rho$ is a trace operator,   then,
 \begin{equation}\|\widetilde{\rho}_{k_n}-\rho \|_{1}\to^{a.s.} 0,\quad n\to \infty.\label{mreq1c}
\end{equation}
\end{theorem}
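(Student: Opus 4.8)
The plan is to prove \textbf{(i)} via an exact operator identity that displays the estimation error as a product of the two empirical errors controlled by Theorem \ref{theorem2}, and then to deduce \textbf{(ii)} by adding the purely deterministic error of projecting the trace operator $\rho$ onto the empirical eigenspace. The algebra rests on the moment identity $D_{X}=\rho\,C_{X}$, obtained by inserting $X_{1}=\rho(X_{0})+\varepsilon_{1}$ into $D_{X}={\rm E}[X_{0}\otimes X_{1}]$ and using that $\varepsilon_{1}$ is uncorrelated with $X_{0}$, so that for every $f\in H$,
\[
D_{X}(f)={\rm E}[\langle X_{0},f\rangle_{H}\,X_{1}]=\rho\,{\rm E}[\langle X_{0},f\rangle_{H}\,X_{0}]=\rho\,C_{X}(f).
\]
Writing $\Pi_{k_{n}}=\widetilde{\pi}^{k_{n}}[\widetilde{\pi}^{k_{n}}]^{\star}$ for the orthogonal projection onto $\overline{\mbox{Sp}}\{\phi_{n,j};\ j=1,\dots,k_{n}\}$ and expanding both $\widetilde{\rho}_{k_{n}}$ and $\widetilde{\pi}^{k_{n}}\rho[\widetilde{\pi}^{k_{n}}]^{\star}$ in the empirical basis, I would verify, using $\mathcal{C}_{n}\phi_{n,j}=C_{n,j}\phi_{n,j}$, the identity
\[
\widetilde{\rho}_{k_{n}}-\widetilde{\pi}^{k_{n}}\rho[\widetilde{\pi}^{k_{n}}]^{\star}=\Pi_{k_{n}}\bigl(\mathcal{D}_{n}-\rho\,\mathcal{C}_{n}\bigr)\Pi_{k_{n}}\,\widetilde{\mathcal{C}}_{n}^{-1},
\]
in which, by the moment identity, $\mathcal{D}_{n}-\rho\,\mathcal{C}_{n}=(\mathcal{D}_{n}-D_{X})-\rho\,(\mathcal{C}_{n}-C_{X})$.

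Since the right-hand side has rank at most $k_{n}$, I would apply the elementary bound $\|B\|_{1}\le\sqrt{k_{n}}\,\|B\|_{\mathcal{S}(H)}$ valid for such operators, together with $\|\Pi_{k_{n}}\|_{\mathcal{L}(H)}=1$ and $\|\widetilde{\mathcal{C}}_{n}^{-1}\|_{\mathcal{L}(H)}=1/C_{n,k_{n}}$, to obtain
\[
\bigl\|\widetilde{\rho}_{k_{n}}-\widetilde{\pi}^{k_{n}}\rho[\widetilde{\pi}^{k_{n}}]^{\star}\bigr\|_{1}\le\frac{\sqrt{k_{n}}}{C_{n,k_{n}}}\Bigl(\|\mathcal{D}_{n}-D_{X}\|_{\mathcal{S}(H)}+\|\rho\|_{\mathcal{L}(H)}\,\|\mathcal{C}_{n}-C_{X}\|_{\mathcal{S}(H)}\Bigr).
\]
By Theorem \ref{theorem2}, under ${\rm E}[\|X_{0}\|_{H}^{4}]<\infty$ the bracket is $o\bigl((\ln(n))^{\beta}/n^{1/4}\bigr)$ a.s. To tame $1/C_{n,k_{n}}$ I would combine the eigenvalue perturbation bound $\sup_{j}|C_{n,j}-C_{j}|\le\|\mathcal{C}_{n}-C_{X}\|_{\mathcal{L}(H)}$ with the gap estimate $C_{k_{n}}\ge C_{k_{n}}-C_{k_{n}+1}\ge\Lambda_{k_{n}}^{-1}$; since $\sqrt{k_{n}}\Lambda_{k_{n}}=o(n^{1/4}/(\ln(n))^{\beta})$ forces $\Lambda_{k_{n}}\|\mathcal{C}_{n}-C_{X}\|_{\mathcal{S}(H)}\to0$, this gives $C_{n,k_{n}}\ge(2\Lambda_{k_{n}})^{-1}$, hence $1/C_{n,k_{n}}\le 2\Lambda_{k_{n}}$, eventually a.s. Substituting, the bound is $o\bigl(\sqrt{k_{n}}\Lambda_{k_{n}}(\ln(n))^{\beta}/n^{1/4}\bigr)=o(1)$ a.s. by the rate hypothesis, establishing (\ref{mreq1}).

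For \textbf{(ii)}, I would split $\|\widetilde{\rho}_{k_{n}}-\rho\|_{1}\le\|\widetilde{\rho}_{k_{n}}-\widetilde{\pi}^{k_{n}}\rho[\widetilde{\pi}^{k_{n}}]^{\star}\|_{1}+\|\widetilde{\pi}^{k_{n}}\rho[\widetilde{\pi}^{k_{n}}]^{\star}-\rho\|_{1}$. Under \textbf{Assumption A1} the fourth-moment condition holds, so the first term vanishes a.s. by part \textbf{(i)}. For the deterministic term I would exploit that a trace operator is the $\|\cdot\|_{1}$-limit of its finite-rank truncations: writing $\rho=\rho_{N}+R_{N}$ with $\|R_{N}\|_{1}<\epsilon$, the remainder contributes at most $2\|R_{N}\|_{1}<2\epsilon$, whereas for the fixed finite-rank $\rho_{N}$ one has $\|\Pi_{k_{n}}\rho_{N}\Pi_{k_{n}}-\rho_{N}\|_{1}\to0$ as soon as $\Pi_{k_{n}}\to I$ strongly, almost surely. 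This strong convergence of the empirical projections is exactly the content of Lemma \ref{lemeera}, which yields $\|(I-\Pi_{k_{n}})\rho\|_{\mathcal{L}(H)}\to^{a.s.}0$ and, by the same argument applied to the compact operator $\rho^{\star}$, also $\Pi_{k_{n}}\to I$ on the closed ranges of $\rho$ and $\rho^{\star}$, where the singular vectors of each $\rho_{N}$ live; its hypothesis $k_{n}\Lambda_{k_{n}}=o(\sqrt{n/\ln(n)})$ is implied by the present rate condition since $\beta>1/2$. Letting $n\to\infty$ and then $\epsilon\downarrow0$ gives $\|\widetilde{\pi}^{k_{n}}\rho[\widetilde{\pi}^{k_{n}}]^{\star}-\rho\|_{1}\to0$ a.s., whence (\ref{mreq1c}).

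The main obstacle I anticipate is the control of $1/C_{n,k_{n}}$: the smallest retained empirical eigenvalue tends to $0$, and the trace-versus-Hilbert--Schmidt passage costs a factor $\sqrt{k_{n}}$, so the whole argument hinges on showing that $C_{n,k_{n}}^{-1}$ remains of order $\Lambda_{k_{n}}$ and that the product $\sqrt{k_{n}}\Lambda_{k_{n}}$ is absorbed by the almost-sure rate $o((\ln(n))^{\beta}/n^{1/4})$ of Theorem \ref{theorem2}; this is precisely the role played by the hypothesis $\sqrt{k_{n}}\Lambda_{k_{n}}=o(n^{1/4}/(\ln(n))^{\beta})$.
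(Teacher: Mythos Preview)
Your proposal is correct and follows essentially the same route as the paper: for \textbf{(i)} both arguments pass from the trace to the Hilbert--Schmidt norm via the rank-$k_n$ inequality $\|B\|_{1}\le\sqrt{k_n}\|B\|_{\mathcal S(H)}$, decompose the error through $D_X=\rho\,C_X$, and absorb $\sqrt{k_n}\Lambda_{k_n}$ into the Theorem~\ref{theorem2} rates; for \textbf{(ii)} both use the triangle split $\|\widetilde\rho_{k_n}-\rho\|_1\le\|\widetilde\rho_{k_n}-\widetilde\pi^{k_n}\rho[\widetilde\pi^{k_n}]^\star\|_1+\|\widetilde\pi^{k_n}\rho[\widetilde\pi^{k_n}]^\star-\rho\|_1$ and Lemma~\ref{lemeera} for the second term. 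Two execution differences are worth noting: you are more careful than the paper in distinguishing the empirical $C_{n,k_n}^{-1}$ from $C_{k_n}^{-1}$ and in justifying $C_{n,k_n}^{-1}\le 2\Lambda_{k_n}$ eventually a.s.\ (the paper writes $C_{k_n}^{-1}$ directly); and for the deterministic term in \textbf{(ii)} the paper appeals to the Dominated Convergence Theorem on the singular-value series of the trace-class $\rho$, whereas your finite-rank $\epsilon$-approximation argument makes the same conclusion more explicit.
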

The proof of this result is given in  Section \ref{secproof}.

The strong consistency in $H$ of the associated ARH(1) plug-in predictor $\widetilde{\rho}_{k_n}(X_{n-1})$
of $X_{n}$ then  follows (see also \cite{Bosq00} and Section \ref{secproof}).
\section{A strongly-consistent diagonal componentwise estimator}
\label{dce}
In this section, we consider the following assumption:

\medskip

\noindent \textbf{Assumption A3}. Assume  that $C_{X}$ is strictly positive, i.e.,
$C_{j}>0,$ for every $j\geq 1,$ and $D_{X}$ is a nuclear operator such that $\rho=D_{X}C_{X}^{-1}$ is  compact. 

\medskip

Under \textbf{Assumption A3},    $\rho $  admits   the  singular value decomposition(svd)  
\begin{equation}
\rho (x)\underset{H}{=}\sum_{j=1}^{\infty}\rho_{j}\left\langle x,\psi_{j}
\right\rangle_{H}\widetilde{\psi}_{j},\quad \forall  x\in H,
\label{eqrho}
\end{equation}
\noindent where, for every $j\geq 1,$ $\rho (\psi_{j})=\rho_{j} \widetilde{\psi}_{j},$ with $\rho_{j}\in \mathbb{C}$ being the singular value, and $\psi_{j}$ and 
$\widetilde{\psi}_{j}$  the right and left eigenvectors,  respectively.
Since $D_{X}$ is a nuclear operator, it  admits the  svd
\ $D_{X}(h)\underset{H}{=}\sum_{j=1}^{\infty}d_{j}\left\langle h,\varphi_{j}\right\rangle_{H}\widetilde{\varphi}_{j},$  $ h\in H,$ where $\{\varphi_{j},\ j\geq 1\}$ and $\{\widetilde{\varphi}_{j},\ j\geq 1\}$ are the respective right and left eigenvectors of $D_{X},$
 and $d_{j},$ $j\geq 1,$ are the singular values.  
  $\mathcal{D}_{n}$ is also  nuclear, and 
$\mathcal{D}_{n}(h)\underset{H}{=}\sum_{j=1}^{\infty}d_{n,j}\left\langle h,\varphi_{n,j}\right\rangle_{H}\widetilde{\varphi}_{n,j},$  $h\in H,$ with $\{\varphi_{n,j},\ j\geq 1\}$ and $\{\widetilde{\varphi}_{n,j},\ j\geq 1\}$ being 
the right and left eigenvectors, respectively,  and  $d_{n,j},$ $j\geq 1,$  the singular values.
Applying Lemma 4.2, on p. 103, in \cite{Bosq00},  
\begin{eqnarray}
&&\sup_{j\geq 1}|C_{j}-C_{n,j}|\leq \|C_{X}-\mathcal{C}_{n}\|_{\mathcal{L}(H)}\leq 
\|C_{X}-\mathcal{C}_{n}\|_{\mathcal{S}(H)}\to_{a.s.} 0,\ n\to \infty
\nonumber\\ &&\sup_{j\geq 1}|d_{j}-d_{n,j}|\leq 
\|D_{X}-\mathcal{D}_{n}\|_{\mathcal{S}(H)}\to_{a.s.} 0,\ n\to \infty.
\label{csvd}
\end{eqnarray}
From Theorem \ref{theorem2} (see equation (\ref{csvd})), under the conditions assumed in such a theroem, for $n$ sufficiently large, in view of \textbf{Assumption A3}, 
the composition operator $\mathcal{D}_{n}\mathcal{C}_{n}^{-1}$ is compact  on $H,$ admitting the svd
\begin{equation}\mathcal{D}_{n}\mathcal{C}_{n}^{-1}(h)=\sum_{j=1}^{n}\widehat{\rho}_{n,j}
\widetilde{\psi}_{n,j}\left\langle h,\psi_{n,j}\right\rangle_{H},\quad \forall h\in H,\label{svde2}
\end{equation}
\noindent where  $\mathcal{D}_{n}\mathcal{C}_{n}^{-1}(\psi_{n,j})= 
\widehat{\rho}_{n,j}\widetilde{\psi}_{n,j},$ for $j=1,\dots,n,$
 with $\{\psi_{n,j},\ j\geq 1\}$ and 
$\{\widetilde{\psi}_{n,j},\ j\geq 1\}$ being the empirical  right and left
eigenvectors of $\rho.$  
\begin{proposition}\label{remfv}
Under   conditions  in 
Theorem \ref{proposition5}\textbf{(ii)}, and \textbf{Assumption A3},
\begin{equation}\|\mathcal{D}_{n}\mathcal{C}_{n}^{-1}-D_{X}C_{X}^{-1}\|_{\mathcal{L}(H)}\to_{a.s.} 0,\quad n\to \infty.\label{scempr}
\end{equation}

\end{proposition}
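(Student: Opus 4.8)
The plan is to convert the statement into estimation errors already controlled by Theorem \ref{theorem2}, exploiting the factorization $D_{X}=\rho C_{X}$. This identity follows from the state equation (\ref{24bb}) and the orthogonality of $\varepsilon_{1}$ and $X_{0}$, since $D_{X}(f)={\rm E}[\langle X_{0},f\rangle_{H}X_{1}]=\rho\,{\rm E}[\langle X_{0},f\rangle_{H}X_{0}]=\rho C_{X}(f)$ for every $f\in H$. First I would split, by the triangle inequality,
\begin{equation}
\|\mathcal{D}_{n}\mathcal{C}_{n}^{-1}-D_{X}C_{X}^{-1}\|_{\mathcal{L}(H)}\leq \|\mathcal{D}_{n}\mathcal{C}_{n}^{-1}-\widetilde{\rho}_{k_{n}}\|_{\mathcal{L}(H)}+\|\widetilde{\rho}_{k_{n}}-\rho\|_{\mathcal{L}(H)},\nonumber
\end{equation}
and dispose of the second summand at once: since $\|\cdot\|_{\mathcal{L}(H)}\leq\|\cdot\|_{1}$, Theorem \ref{proposition5}\textbf{(ii)} already gives $\|\widetilde{\rho}_{k_{n}}-\rho\|_{\mathcal{L}(H)}\leq\|\widetilde{\rho}_{k_{n}}-\rho\|_{1}\to_{a.s.}0$. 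Thus the task reduces to bounding $\|\mathcal{D}_{n}\mathcal{C}_{n}^{-1}-\rho\|_{\mathcal{L}(H)}$ on the principal empirical eigenspace.

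Next I would insert $D_{X}=\rho C_{X}$ and use $\mathcal{C}_{n}\mathcal{C}_{n}^{-1}=\Pi_{k_{n}}$, the orthogonal projection onto $\overline{\mbox{Sp}}^{\|\cdot\|_{H}}\{\phi_{n,j};\ j=1,\dots,k_{n}\}$, to write
\begin{equation}
\mathcal{D}_{n}\mathcal{C}_{n}^{-1}-\rho=(\mathcal{D}_{n}-D_{X})\mathcal{C}_{n}^{-1}+\rho\,(C_{X}-\mathcal{C}_{n})\mathcal{C}_{n}^{-1}-\rho\,(I-\Pi_{k_{n}}).\nonumber
\end{equation}
By submultiplicativity and $\|\cdot\|_{\mathcal{L}(H)}\leq\|\cdot\|_{\mathcal{S}(H)}$, the first two summands are dominated by $\|\mathcal{C}_{n}^{-1}\|_{\mathcal{L}(H)}=C_{n,k_{n}}^{-1}$ times $\|\mathcal{D}_{n}-D_{X}\|_{\mathcal{S}(H)}$ and $\|\rho\|_{\mathcal{L}(H)}\|C_{X}-\mathcal{C}_{n}\|_{\mathcal{S}(H)}$, respectively. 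The third summand is a pure truncation bias; rewriting it through adjoints as $\|(I-\Pi_{k_{n}})\rho^{\star}\|_{\mathcal{L}(H)}$, I would send it to zero using compactness of $\rho^{\star}$ and the strong convergence $\Pi_{k_{n}}\to I$, a consequence of $k_{n}\to\infty$ and the consistency of the empirical eigenvectors in Theorem \ref{theorem2}; this is the input-side counterpart of Lemma \ref{lemeera}.

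It would then remain to assemble the rates. Theorem \ref{theorem2} supplies $\|\mathcal{D}_{n}-D_{X}\|_{\mathcal{S}(H)}=\mathcal{O}((\ln(n)/n)^{1/2})$ and $\|C_{X}-\mathcal{C}_{n}\|_{\mathcal{S}(H)}=\mathcal{O}((\ln(n)/n)^{1/2})$ a.s. For the inverse norm, the gap quantity (\ref{uee}) gives $C_{k_{n}}\geq C_{k_{n}}-C_{k_{n}+1}\geq\Lambda_{k_{n}}^{-1}$; combined with $\sup_{j\geq1}|C_{j}-C_{n,j}|\leq\|C_{X}-\mathcal{C}_{n}\|_{\mathcal{S}(H)}\to_{a.s.}0$ from (\ref{csvd}), and with $\Lambda_{k_{n}}=o(n^{1/4})$, this should yield $C_{n,k_{n}}^{-1}=\mathcal{O}(\Lambda_{k_{n}})$ a.s. Under the standing hypothesis $\sqrt{k_{n}}\Lambda_{k_{n}}=o(n^{1/4}/(\ln(n))^{\beta})$ with $\beta>1/2$, the two products then become $\mathcal{O}(\Lambda_{k_{n}}(\ln(n)/n)^{1/2})=o(1)$, which closes the estimate.

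The hard part will be exactly this last balancing: $\mathcal{C}_{n}^{-1}$ is bounded only on the retained $k_{n}$-dimensional eigenspace, its norm $C_{n,k_{n}}^{-1}$ diverges, and strong-consistency survives only because that divergence, measured by $\Lambda_{k_{n}}$, is strictly slower than the $(\ln(n)/n)^{1/2}$ decay of the covariance and cross-covariance errors. Passing rigorously, and uniformly on an almost-sure event, from the deterministic bound $C_{k_{n}}^{-1}\leq\Lambda_{k_{n}}$ to its empirical analogue $C_{n,k_{n}}^{-1}=\mathcal{O}(\Lambda_{k_{n}})$ is the delicate point; once it is secured, the truncation bias $\rho(I-\Pi_{k_{n}})$ is comparatively routine.
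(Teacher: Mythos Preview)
Your opening split $\|\mathcal{D}_{n}\mathcal{C}_{n}^{-1}-\rho\|\leq\|\mathcal{D}_{n}\mathcal{C}_{n}^{-1}-\widetilde{\rho}_{k_{n}}\|+\|\widetilde{\rho}_{k_{n}}-\rho\|$ is exactly how the paper proceeds, but your transition sentence is muddled: after disposing of the second summand the residual task is the \emph{first} summand, not the full $\|\mathcal{D}_{n}\mathcal{C}_{n}^{-1}-\rho\|$. In fact your three-term identity
\[
\mathcal{D}_{n}\mathcal{C}_{n}^{-1}-\rho=(\mathcal{D}_{n}-D_{X})\mathcal{C}_{n}^{-1}+\rho(C_{X}-\mathcal{C}_{n})\mathcal{C}_{n}^{-1}-\rho(I-\Pi_{k_{n}})
\]
proves the proposition outright, so the initial split is never used; the argument is correct but the exposition is circular.

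On substance your route differs from the paper's. The paper keeps the split and handles $\|\mathcal{D}_{n}\mathcal{C}_{n}^{-1}-\widetilde{\rho}_{k_{n}}\|$ by a Lemma~\ref{lemeera}--type eigenvector perturbation bound (replacing $\phi_{n,j}$ by $\phi_{n,j}'$ and controlling the tail), then invokes Theorem~\ref{proposition5}(ii) as a black box for $\|\widetilde{\rho}_{k_{n}}-\rho\|$. Your direct decomposition instead replays the estimate (\ref{iq}) from the proof of Theorem~\ref{proposition5}(i), now in $\mathcal{L}(H)$ rather than in trace norm, and kills the bias $\rho(I-\Pi_{k_{n}})$ via compactness of $\rho^{\star}$ (available under \textbf{A3}) together with strong convergence $\Pi_{k_{n}}\to I$. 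This is more self-contained and avoids the detour through the trace norm; the paper's route is shorter because it recycles Theorem~\ref{proposition5}(ii). Both tacitly read $\mathcal{C}_{n}^{-1}$ as the rank-$k_{n}$ truncated inverse $\widetilde{\mathcal{C}}_{n}^{-1}$ so that $\|\mathcal{C}_{n}^{-1}\|_{\mathcal{L}(H)}=C_{n,k_{n}}^{-1}$; the delicate step you flag, upgrading $C_{k_{n}}^{-1}\leq\Lambda_{k_{n}}$ to $C_{n,k_{n}}^{-1}=\mathcal{O}(\Lambda_{k_{n}})$ a.s.\ via (\ref{csvd}) and the rate hypothesis, is correctly argued.
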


The proof of this proposition directly follows from 
\begin{eqnarray}
&&\sup_{x\in H:\|x\|_{H}\leq 1}\|\mathcal{D}_{n}\mathcal{C}_{n}^{-1}(x)-D_{X}C_{X}^{-1}(x)\|_{H}\nonumber\\&&\leq 2
\|\mathcal{D}_{n}\mathcal{C}_{n}^{-1}\|_{\mathcal{L}(H)}\left[\sum_{j=1}^{k_{n}}\|\phi_{n,j}^{\prime }-\phi_{n,j}\|_{H}+\sum_{j=k_{n}+1}^{\infty}\left\|\phi_{n,j}^{\prime}\right\|_{H}\right]\nonumber\\&&
+\|\widetilde{\rho}_{k_n}-D_{X}C_{X}^{-1}\|_{\mathcal{L}(H)}\to_{a.s.} 0,\quad n\to 
\infty, \label{eremfv}
\end{eqnarray}
\noindent where $\phi_{n,j}^{\prime} =sgn\langle \phi_{j}, \phi_{n,j} \rangle_{H} \phi_{j},$ with $sgn \langle \phi_{j}, \phi_{n,j} \rangle_{H} =  \boldsymbol{1}_{\langle \phi_{j}, \phi_{n,j} \rangle_H \geq 0} -$   \linebreak $\boldsymbol{1}_{\langle \phi_{j}, \phi_{n,j} \rangle_H < 0}.$
Under \textbf{Assumption A3}, equation (\ref{scempr}) holds, if the conditions assumed in \cite{Bosq00} for the strong-consistency of $\widetilde{\rho}_{k_n}$ in $\mathcal{L}(H)$ hold.   
From Proposition  \ref{remfv}, and  
(\ref{eqrho}) and (\ref{svde2}), 
applying Lemma 4.2, on p. 103 in \cite{Bosq00},
\begin{eqnarray}
 \sup_{j\geq 1}\left|\widehat{\rho}_{n,j}-\rho_{j}\right|
 &\leq &\|\mathcal{D}_{n}\mathcal{C}_{n}^{-1}-D_{X}C_{X}^{-1}\|_{\mathcal{L}(H)}
 \to^{a.s.} 0,  \ n\to \infty .
\label{eqemsvtotsv}
\end{eqnarray}
Let us  define the following quantity:
\begin{equation}\Lambda_{k_{n}}^{\rho}=\sup_{1\leq j\leq k_{n}}(|\rho_{j}|^{2}-|\rho_{j+1}|^{2})^{-1},\label{ueev2}
\end{equation}
\noindent where $k_n$ denotes the truncation parameter introduced in Section \ref{sec:2}.
We now apply the methodology  of the proof of Lemma 4.3, on p. 104, and Corollary 4.3, on p. 107,  in \cite{Bosq00}, to obtain 
the strong-consistency of the empirical right and left eigenvectors, $\{\psi_{n,j},\ j\geq 1\}$ and $\{\widetilde{\psi}_{n,j},\ j\geq 1\}$ of $\rho,$   under the following additional assumption:

\medskip

\noindent \textbf{Assumption A4}. Consider $\left[\sup_{j\geq 1}|\rho_{j}|+ 
\sup_{j\geq 1}|\widehat{\rho}_{n,j}|\right]\leq 1.$

\medskip

\begin{lemma}
\label{lemeerlc}
Under \textbf{Assumptions A3--A4}, and   the conditions of \linebreak Theorem \ref{proposition5}\textbf{(ii)}, if
$\Lambda_{k_{n}}^{\rho}$   in  (\ref{ueev2}) is such that,  as $n\to \infty,$ $\Lambda_{k_{n}}^{\rho}=o\left(\frac{1}{M_{n}}\right),$ with 
\linebreak $\|\mathcal{D}_{n}\mathcal{C}_{n}^{-1}-D_{X}C_{X}^{-1}\|_{\mathcal{L}(H)}=\mathcal{O}\left( M_{n}\right),$ a.s.,  then, \begin{eqnarray}&&\sup_{1\leq j\leq k_{n}}\|\psi_{n,j}-\psi_{n,j}^{\prime }\|_{H}\to_{a.s.} 0, \quad \sup_{1\leq j\leq k_{n}}\|\widetilde{\psi}_{n,j}-
\widetilde{\psi}_{n,j}^{\prime }
\|_{H}\to_{a.s.} 0, \label{cre}
\end{eqnarray}
\noindent  where, for $j \geq 1,~ n\geq 2,$
$\psi_{n,j}^{\prime }= \mbox{sgn}\left\langle \psi_{n,j} , \psi_{j} \right\rangle_{H}\psi_{j}\quad \widetilde{\psi}_{n,j}^{\prime }= \mbox{sgn}\left\langle \widetilde{\psi}_{n,j}, \widetilde{\psi}_{j} 
\right\rangle_{H}\widetilde{\psi}_{j},$ with $\mbox{sgn}\langle \psi_{n,j}, \psi_j \rangle_H= \mathbf{1}_{\langle \psi_{n,j}, \psi_j \rangle_H\geq 0}-\mathbf{1}_{\langle \psi_{n,j}, \psi_j \rangle_H< 0}$ and $\mbox{sgn} \langle \widetilde{\psi}_{n,j}, \widetilde{\psi}_j \rangle_H= \mathbf{1}_{\langle \widetilde{\psi}_{n,j}, \widetilde{\psi}_j \rangle_H\geq 0}-\mathbf{1}_{\langle \widetilde{\psi}_{n,j}, \widetilde{\psi}_j \rangle_H< 0}$.
\end{lemma}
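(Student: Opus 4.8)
The plan is to reduce the consistency of the empirical singular vectors to the consistency of the eigenvectors of suitable self-adjoint compact operators, and then to invoke the eigenvector-perturbation methodology of Lemma~4.3 and Corollary~4.3 in \cite{Bosq00}. The starting observation is that, from the svd (\ref{eqrho}), the right singular vectors $\{\psi_{j}\}$ are exactly the eigenvectors of the self-adjoint compact operator $\rho^{\star}\rho$, since $\rho^{\star}\rho(x)\underset{H}{=}\sum_{j\geq 1}|\rho_{j}|^{2}\langle x,\psi_{j}\rangle_{H}\psi_{j}$, with eigenvalues $|\rho_{j}|^{2}$; symmetrically, the left singular vectors $\{\widetilde{\psi}_{j}\}$ are the eigenvectors of $\rho\rho^{\star}$, with the same eigenvalues $|\rho_{j}|^{2}$. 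The identical identities hold, for $n$ large enough, for the empirical operator $\mathcal{D}_{n}\mathcal{C}_{n}^{-1}$ in (\ref{svde2}), whose squared versions have eigenvectors $\{\psi_{n,j}\}$ and $\{\widetilde{\psi}_{n,j}\}$ and eigenvalues $|\widehat{\rho}_{n,j}|^{2}$. Hence the spectral gaps governing the perturbation are precisely $|\rho_{j}|^{2}-|\rho_{j+1}|^{2}$, whose reciprocal supremum over $1\leq j\leq k_{n}$ is the quantity $\Lambda_{k_{n}}^{\rho}$ in (\ref{ueev2}).

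First I would bound the operator-norm perturbation of the squared operators by that of $\mathcal{D}_{n}\mathcal{C}_{n}^{-1}$ itself. Writing $A=\mathcal{D}_{n}\mathcal{C}_{n}^{-1}$ and $B=D_{X}C_{X}^{-1}=\rho$, the telescoping identity $A^{\star}A-B^{\star}B=A^{\star}(A-B)+(A-B)^{\star}B$ yields
\[
\|(\mathcal{D}_{n}\mathcal{C}_{n}^{-1})^{\star}(\mathcal{D}_{n}\mathcal{C}_{n}^{-1})-\rho^{\star}\rho\|_{\mathcal{L}(H)}\leq\left(\|\mathcal{D}_{n}\mathcal{C}_{n}^{-1}\|_{\mathcal{L}(H)}+\|\rho\|_{\mathcal{L}(H)}\right)\|\mathcal{D}_{n}\mathcal{C}_{n}^{-1}-\rho\|_{\mathcal{L}(H)}.
\]
Since $\|\rho\|_{\mathcal{L}(H)}=\sup_{j\geq 1}|\rho_{j}|$ and $\|\mathcal{D}_{n}\mathcal{C}_{n}^{-1}\|_{\mathcal{L}(H)}=\sup_{j\geq 1}|\widehat{\rho}_{n,j}|$, \textbf{Assumption A4} keeps the prefactor at most $1$, so that, by Proposition~\ref{remfv} and the hypothesis $\|\mathcal{D}_{n}\mathcal{C}_{n}^{-1}-D_{X}C_{X}^{-1}\|_{\mathcal{L}(H)}=\mathcal{O}(M_{n})$ a.s.,
\[
\|(\mathcal{D}_{n}\mathcal{C}_{n}^{-1})^{\star}(\mathcal{D}_{n}\mathcal{C}_{n}^{-1})-\rho^{\star}\rho\|_{\mathcal{L}(H)}=\mathcal{O}(M_{n})\quad \text{a.s.}
\]
The analogous identity $AA^{\star}-BB^{\star}=(A-B)A^{\star}+B(A-B)^{\star}$ gives the same rate for $\|(\mathcal{D}_{n}\mathcal{C}_{n}^{-1})(\mathcal{D}_{n}\mathcal{C}_{n}^{-1})^{\star}-\rho\rho^{\star}\|_{\mathcal{L}(H)}$.

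Next I would transfer the sign-corrected eigenvector-perturbation inequality from the proof of Lemma~4.3 and Corollary~4.3 in \cite{Bosq00} to the self-adjoint pair $\rho^{\star}\rho$, $(\mathcal{D}_{n}\mathcal{C}_{n}^{-1})^{\star}(\mathcal{D}_{n}\mathcal{C}_{n}^{-1})$. That inequality bounds the sign-corrected error of the $j$th eigenvector by a constant multiple of the gap factor times the operator-norm perturbation, which here produces, for a universal constant $c>0$,
\[
\sup_{1\leq j\leq k_{n}}\|\psi_{n,j}-\psi_{n,j}^{\prime}\|_{H}\leq c\,\Lambda_{k_{n}}^{\rho}\,\|(\mathcal{D}_{n}\mathcal{C}_{n}^{-1})^{\star}(\mathcal{D}_{n}\mathcal{C}_{n}^{-1})-\rho^{\star}\rho\|_{\mathcal{L}(H)}=c\,\Lambda_{k_{n}}^{\rho}\,\mathcal{O}(M_{n})\quad \text{a.s.}
\]
The hypothesis $\Lambda_{k_{n}}^{\rho}=o(1/M_{n})$ then forces the right-hand side to $0$ a.s. The bound for the left singular vectors $\widetilde{\psi}_{n,j}$ follows verbatim upon replacing $\rho^{\star}\rho$ by $\rho\rho^{\star}$ and $\psi$ by $\widetilde{\psi}$, the two families of eigenvalues---and hence the gaps and the same factor $\Lambda_{k_{n}}^{\rho}$---being identical. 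I expect the main obstacle to lie in matching Bosq's gap quantity to $\Lambda_{k_{n}}^{\rho}$ and in preventing the squaring step from degrading the rate: one must check that $\sup_{1\leq j\leq k_{n}}(|\rho_{j}|^{2}-|\rho_{j+1}|^{2})^{-1}$ dominates every spectral gap needed at indices $1\leq j\leq k_{n}$, which is immediate once the $|\rho_{j}|^{2}$ are strictly decreasing there under \textbf{Assumption A3}, while the normalisation in \textbf{Assumption A4} is exactly what caps the prefactor $(\|A\|+\|B\|)$ at $1$ and so preserves the delicate balance between the $\mathcal{O}(M_{n})$ rate and the $o(1/M_{n})$ gap condition that drives the product to zero.
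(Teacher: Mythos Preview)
Your proposal is correct and follows essentially the same route as the paper: reduce to the eigenvectors of the self-adjoint operators $\rho^{\star}\rho$ and $\rho\rho^{\star}$ (resp.\ their empirical counterparts), control $\|A^{\star}A-B^{\star}B\|_{\mathcal{L}(H)}$ by $\|A-B\|_{\mathcal{L}(H)}$ via \textbf{Assumption~A4}, and then apply the gap-based eigenvector perturbation bound with $\Lambda_{k_{n}}^{\rho}$. The only cosmetic difference is that the paper reproves the Bosq Lemma~4.3/Corollary~4.3 inequality in situ (obtaining the explicit constant $c=2\sqrt{2}$), whereas you invoke it as a black box; your telescoping identity for $A^{\star}A-B^{\star}B$ is exactly what the paper uses implicitly to pass from the squared operators back to $\|\mathcal{D}_{n}\mathcal{C}_{n}^{-1}-D_{X}C_{X}^{-1}\|_{\mathcal{L}(H)}$.
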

The proof of this lemma is given in Section \ref{secproof}. 

The following diagonal componentwise estimator 
$\widehat{\rho}_{k_{n}}$ of $\rho $ is  formulated:
\begin{equation}
\widehat{\rho}_{k_{n}}(x)=\sum_{j=1}^{k_{n}}
\widehat{\rho}_{n,j}\left\langle x,\psi_{n,j}
\right\rangle_{H}\widetilde{\psi}_{n,j},\quad \forall  x\in H.
\label{eqrhoest}
\end{equation}
\noindent The next result derives the strong-consistency of $\widehat{\rho}_{k_{n}}.$
\begin{theorem}
\label{sdsvde}
Under the conditions of Lemma \ref{lemeerlc}, if, as  $n\to \infty,$ \linebreak $k_{n}\Lambda_{k_{n}}^{\rho}=o\left(\frac{1}{M_{n}}\right),$ with
$\|\mathcal{D}_{n}\mathcal{C}_{n}^{-1}-D_{X}C_{X}^{-1}\|_{\mathcal{L}(H)}=\mathcal{O}\left( M_{n}\right),$ a.s.,  then, $
\|\widehat{\rho}_{k_{n}}-\rho\|_{\mathcal{L}(H)}\to_{a.s.}0,$ $n\to \infty.$
\end{theorem}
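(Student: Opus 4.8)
The plan is to benchmark the estimator (\ref{eqrhoest}) against the rank-$k_n$ truncation of the true decomposition (\ref{eqrho}), namely $\rho_{k_n}(x)=\sum_{j=1}^{k_n}\rho_j\langle x,\psi_j\rangle_H\widetilde{\psi}_j$, and to separate a deterministic truncation error from a stochastic estimation error via
\[\|\widehat{\rho}_{k_n}-\rho\|_{\mathcal{L}(H)}\leq\|\widehat{\rho}_{k_n}-\rho_{k_n}\|_{\mathcal{L}(H)}+\|\rho_{k_n}-\rho\|_{\mathcal{L}(H)}.\]
First I would dispose of the truncation error: under \textbf{Assumption A3} the operator $\rho$ is compact and, the singular values being ordered by decreasing modulus as implied by (\ref{ueev2}), $\|\rho_{k_n}-\rho\|_{\mathcal{L}(H)}=\sup_{j>k_n}|\rho_j|=|\rho_{k_n+1}|\to 0$ because $k_n\to\infty$. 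The whole problem thus reduces to the estimation error.

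For the estimation error I would telescope, isolating the perturbation of the singular values from that of the singular vectors,
\[\widehat{\rho}_{k_n}-\rho_{k_n}=\sum_{j=1}^{k_n}(\widehat{\rho}_{n,j}-\rho_j)\langle\cdot,\psi_{n,j}\rangle_H\widetilde{\psi}_{n,j}+\sum_{j=1}^{k_n}\rho_j\left[\langle\cdot,\psi_{n,j}\rangle_H\widetilde{\psi}_{n,j}-\langle\cdot,\psi_j\rangle_H\widetilde{\psi}_j\right].\]
Since $\{\psi_{n,j}\}$ and $\{\widetilde{\psi}_{n,j}\}$ are orthonormal, the first sum is diagonal in the empirical bases, so its operator norm is exactly $\sup_{1\leq j\leq k_n}|\widehat{\rho}_{n,j}-\rho_j|$, which tends to $0$ almost surely by (\ref{eqemsvtotsv}).

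The second sum carries the singular-vector error. Introducing the sign-corrected vectors $\psi_{n,j}'$, $\widetilde{\psi}_{n,j}'$ of Lemma \ref{lemeerlc} and writing each summand as $\rho_j\langle\cdot,\psi_{n,j}-\psi_{n,j}'\rangle_H\widetilde{\psi}_{n,j}+\rho_j\langle\cdot,\psi_{n,j}'\rangle_H(\widetilde{\psi}_{n,j}-\widetilde{\psi}_{n,j}')$, I would bound the first piece by orthonormality of $\{\widetilde{\psi}_{n,j}\}$ and the second by Cauchy--Schwarz, $\sum_{j=1}^{k_n}|\langle x,\psi_{n,j}'\rangle_H|\leq\sqrt{k_n}\,\|x\|_H$, obtaining the operator-norm bound
\[\sup_{j\geq 1}|\rho_j|\sqrt{k_n}\left[\sup_{1\leq j\leq k_n}\|\psi_{n,j}-\psi_{n,j}'\|_H+\sup_{1\leq j\leq k_n}\|\widetilde{\psi}_{n,j}-\widetilde{\psi}_{n,j}'\|_H\right].\]
This is precisely where the strengthened hypothesis is used: the methodology underlying Lemma \ref{lemeerlc} (Lemma 4.3 and Corollary 4.3 in \cite{Bosq00}) furnishes eigenvector bounds of order $\mathcal{O}(\Lambda_{k_n}^{\rho}M_n)$ a.s., with $M_n=\|\mathcal{D}_n\mathcal{C}_n^{-1}-D_XC_X^{-1}\|_{\mathcal{L}(H)}$ as in (\ref{scempr}), so the display is $\mathcal{O}(\sqrt{k_n}\,\Lambda_{k_n}^{\rho}M_n)=\mathcal{O}(k_n\Lambda_{k_n}^{\rho}M_n)=o(1)$ under $k_n\Lambda_{k_n}^{\rho}=o(1/M_n)$, whereas the weaker condition $\Lambda_{k_n}^{\rho}=o(1/M_n)$ of Lemma \ref{lemeerlc} would fail to absorb the factor $\sqrt{k_n}$ created by summing $k_n$ vector errors.

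The step I expect to be the main obstacle is the sign bookkeeping implicit above, namely the identity $\langle x,\psi_j\rangle_H\widetilde{\psi}_j=\langle x,\psi_{n,j}'\rangle_H\widetilde{\psi}_{n,j}'$, which holds only when $\mbox{sgn}\langle\psi_{n,j},\psi_j\rangle_H=\mbox{sgn}\langle\widetilde{\psi}_{n,j},\widetilde{\psi}_j\rangle_H$. I would handle this through the paired structure of the decomposition: applying $\mathcal{D}_n\mathcal{C}_n^{-1}\approx\rho$ (Proposition \ref{remfv}) to $\psi_{n,j}$, together with $\rho(\psi_j)=\rho_j\widetilde{\psi}_j$, $\mathcal{D}_n\mathcal{C}_n^{-1}(\psi_{n,j})=\widehat{\rho}_{n,j}\widetilde{\psi}_{n,j}$ and $\rho_j,\widehat{\rho}_{n,j}\geq 0$, shows that any index with a sign mismatch must satisfy $|\rho_j|=\mathcal{O}(M_n)$. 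Grouping these indices into one diagonal operator in each basis, their joint contribution to the second sum is $\mathcal{O}(M_n)\to 0$ regardless of their number, while for the remaining indices the corrected identity holds and the previous bound applies. Assembling the three estimates yields $\|\widehat{\rho}_{k_n}-\rho\|_{\mathcal{L}(H)}\to_{a.s.}0$. (If, as (\ref{svde2})--(\ref{eqrhoest}) suggest, $\widehat{\rho}_{k_n}$ is exactly the rank-$k_n$ truncation of $\mathcal{D}_n\mathcal{C}_n^{-1}$, a shorter route is available, since then $\|\widehat{\rho}_{k_n}-\mathcal{D}_n\mathcal{C}_n^{-1}\|_{\mathcal{L}(H)}=\widehat{\rho}_{n,k_n+1}\to 0$ and Proposition \ref{remfv} finishes; the telescoping above is the more robust argument and is the one making all the stated hypotheses active.)
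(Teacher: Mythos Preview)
Your argument is correct and runs on the same engine as the paper's proof---singular-value perturbation (\ref{eqemsvtotsv}) together with the singular-vector bounds (\ref{efveeiv2})--(\ref{efveeiv2hh}) underlying Lemma~\ref{lemeerlc}---but the packaging differs. The paper splits into three pieces $a_n+b_n+c_n$ built from the projections $\widetilde{\Pi}^{k_n}$, $\Pi^{k_n}$ onto the empirical and true right singular subspaces, and controls the tail $c_n$ by re-running the mechanism of Lemma~\ref{lemeera}; your two-term split against the rank-$k_n$ truncation $\rho_{k_n}$ gives the tail directly as $|\rho_{k_n+1}|\to 0$, which is cleaner. You also squeeze $\sqrt{k_n}$ (via orthonormality of $\{\widetilde{\psi}_{n,j}\}$ and $\{\psi_{n,j}'\}$) where the paper is content with the cruder factor $k_n$ from a triangle-inequality sum; both are absorbed by the hypothesis $k_n\Lambda_{k_n}^\rho M_n\to 0$, so nothing is gained at the level of the statement, but your estimate is sharper. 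Finally, the sign-coherence issue you flag is real: the paper's telescoping of $a_n$ silently uses $\langle\cdot,\psi_j\rangle_H\widetilde{\psi}_j=\langle\cdot,\psi_{n,j}'\rangle_H\widetilde{\psi}_{n,j}'$, which requires $\mbox{sgn}\langle\psi_{n,j},\psi_j\rangle_H=\mbox{sgn}\langle\widetilde{\psi}_{n,j},\widetilde{\psi}_j\rangle_H$. Your treatment---mismatched indices force $|\rho_j|$ small, and their joint contribution is a diagonal operator in each orthonormal basis---closes this gap. One small correction: the bound you obtain for such $\rho_j$ is $\mathcal{O}\bigl(M_n(1+\Lambda_{k_n}^\rho)\bigr)$ rather than $\mathcal{O}(M_n)$, but this is still $o(1)$ under the standing hypotheses, so the conclusion is unaffected.
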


The proof of this result  is given  in Section \ref{secproof}. 
\vspace{-0.20cm}

\section{Proofs of the  results}
\label{secproof}
\subsection*{Proof of Lemma 1}

 Let us denote $\phi_{n,j}^{\prime} =sgn\langle \phi_{j}, \phi_{n,j} \rangle_{H} \phi_{j},$ where $sgn \langle \phi_{j}, \phi_{n,j} \rangle_{H} =  \boldsymbol{1}_{\langle \phi_{j}, \phi_{n,j} \rangle_H \geq 0} -   \boldsymbol{1}_{\langle \phi_{j}, \phi_{n,j} \rangle_H < 0}.$  Applying the triangle and  Cauchy--Schwarz inequalities, 
we obtain, as $n\to \infty,$
\begin{eqnarray}&&\sup_{x\in H,\ \|x\|_{H}\leq 1}\left\|\sum_{j=1}^{k_{n}}\left\langle \rho(x),\phi_{n,j}\right\rangle_{H}\phi_{n,j}-\rho(x)\right\|_{H}\nonumber\\&&\leq  \sup_{x\in H,\ \|x\|_{H}\leq 1}\left\|\sum_{j=1}^{k_{n}}\left\langle \rho(x),
\phi_{n,j}\right\rangle_{H}\phi_{n,j}-\left\langle \rho(x),
\phi_{n,j}^{\prime}\right\rangle_{H}\phi_{n,j}^{\prime}\right\|_{H}
\nonumber\\
&&
+\sup_{x\in H,\ \|x\|_{H}\leq 1}\left\|\sum_{j=k_{n}+1}^{\infty}\left\langle \rho(x),
\phi_{n,j}^{\prime}\right\rangle_{H}\phi_{n,j}^{\prime}\right\|_{H}\nonumber\end{eqnarray}\begin{eqnarray}
&&= \sup_{x\in H,\ \|x\|_{H}\leq 1}\left\|\sum_{j=1}^{k_{n}}\left\langle \rho(x),
\phi_{n,j}\right\rangle_{H}(\phi_{n,j}-\phi_{n,j}^{\prime})+\left\langle \rho(x),
\phi_{n,j}-\phi_{n,j}^{\prime}\right\rangle_{H}\phi_{n,j}^{\prime}\right\|_{H}\nonumber\\
&&+\sup_{x\in H,\ \|x\|_{H}\leq 1}\left\|\sum_{j=k_{n}+1}^{\infty}\left\langle \rho(x),
\phi_{n,j}^{\prime}\right\rangle_{H}\phi_{n,j}^{\prime}\right\|_{H}
\nonumber\\
&&
\leq\sup_{x\in H,\ \|x\|_{H}\leq 1}\sum_{j=1}^{k_{n}}\left|\left\langle \rho(x),
\phi_{n,j}\right\rangle_{H}\right|\|\phi_{n,j}-\phi_{n,j}^{\prime}\|_{H}\nonumber\\&&+\sup_{x\in H,\ \|x\|_{H}\leq 1}\left|\left\langle \rho(x),
\phi_{n,j}-\phi_{n,j}^{\prime}\right\rangle_{H}\right|\left\|\phi_{n,j}^{\prime}\right\|_{H}
\nonumber\\
&&
+\sup_{x\in H,\ \|x\|_{H}\leq 1}\left\|\sum_{j=k_{n}+1}^{\infty}\left\langle \rho(x),
\phi_{n,j}^{\prime}\right\rangle_{H}\phi_{n,j}^{\prime}\right\|_{H}\nonumber\\&&\leq\sum_{j=1}^{k_{n}}\left\|\rho
\right\|_{\mathcal{L}(H)}
\|\phi_{n,j}-\phi_{n,j}^{\prime}\|_{H}+\left\|\rho
\right\|_{\mathcal{L}(H)}
\|\phi_{n,j}-\phi_{n,j}^{\prime}\|_{H}\nonumber\\&&+\sup_{x\in H,\ \|x\|_{H}\leq 1}\left\|\sum_{j=k_{n}+1}^{\infty}\left\langle \rho(x),
\phi_{n,j}^{\prime}\right\rangle_{H}\phi_{n,j}^{\prime}\right\|_{H}=2\sum_{j=1}^{k_{n}}\left\|\rho
\right\|_{\mathcal{L}(H)}
\|\phi_{n,j}-\phi_{n,j}^{\prime}\|_{H}\nonumber\\&& +\sup_{x\in H,\ \|x\|_{H}\leq 1}\left\|\sum_{j=k_{n}+1}^{\infty}\left\langle \rho(x),
\phi_{n,j}^{\prime}\right\rangle_{H}\phi_{n,j}^{\prime}\right\|_{H}\nonumber\\
&&\leq 4\sqrt{2}\left\|\rho
\right\|_{\mathcal{L}(H)}k_{n}\Lambda_{k_{n}}\|\mathcal{C}_{n}-C_{X}\|_{\mathcal{S}(H)}\nonumber\\
&&
+\sup_{x\in H,\ \|x\|_{H}\leq 1}\left\|\sum_{j=k_{n}+1}^{\infty}\left\langle \rho(x),
\phi_{n,j}^{\prime}\right\rangle_{H}\phi_{n,j}^{\prime}\right\|_{H}, \label{eql1proof}
\end{eqnarray}
\noindent since, from  Corollary 4.3 in p.107 in  \cite{Bosq00}, \begin{equation}\sup_{1\leq j\leq k_{n}}\|\phi_{n,j}-\phi_{n,j}^{\prime}\|_{H}\leq 2\sqrt{2}\Lambda_{k_{n}}\|\mathcal{C}_{n}-C_{X}\|_{\mathcal{S}(H)}.\label{f454B00}\end{equation}
\noindent From  (\ref{eql1proof}), under the condition $$k_{n}\Lambda_{k_{n}}=o\left(\sqrt{\frac{n}{\ln(n)}}\right),\quad n\to \infty,$$
\noindent applying Theorem 1, we obtain 
$$\sup_{x\in H,\ \|x\|_{H}\leq 1}\left\|\rho(x)-\sum_{j=1}^{k_{n}}\left\langle \rho(x),\phi_{n,j}\right\rangle_{H}\phi_{n,j}\right\|_{H}\to_{a.s.} 0,\quad n\to \infty.$$

\subsection*{Proof of Theorem  2}

\noindent \textbf{(i)}\  Applying H\"older  and triangle inequalities, since $\rho=D_{X}C_{X}^{-1 }$ is bounded, from Theorem 1, under  $\sqrt{k_{n}}\Lambda_{k_{n}}=o\left(\frac{n^{1/4}}{(\ln(n))^{\beta }} \right)$ as $n\to \infty,$ for $\beta >1/2,$  
\begin{eqnarray}
&&\|\widetilde{\pi}^{k_{n}}\mathcal{D}_{n}\mathcal{C}_{n}^{-1}[\widetilde{\pi}^{k_{n}}]^{\star}-\widetilde{\pi}^{k_{n}}D_{X}C_{X}^{-1}[\widetilde{\pi}^{k_{n}}]^{\star}\|_{1}\nonumber\\
&&\leq \sqrt{k_{n}}\|\widetilde{\pi}^{k_{n}}\mathcal{D}_{n}\mathcal{C}_{n}^{-1}[\widetilde{\pi}^{k_{n}}]^{\star}-\widetilde{\pi}_{k_{n}}D_{X}C_{X}^{-1}[\widetilde{\pi}_{k_{n}}]^{\star}\|_{\mathcal{S}(H)}
\nonumber\\
&&\leq \sqrt{k_{n}}\|\widetilde{\pi}^{k_{n}}\mathcal{D}_{n}\mathcal{C}_{n}^{-1}[\widetilde{\pi}^{k_{n}}]^{\star}-\widetilde{\pi}^{k_{n}}D_{X}\mathcal{C}_{n}^{-1}[\widetilde{\pi}^{k_{n}}]^{\star}\|_{\mathcal{S}(H)}\nonumber\\
&&+
\sqrt{k_{n}}\|\widetilde{\pi}^{k_{n}}D_{X}\mathcal{C}_{n}^{-1}[\widetilde{\pi}^{k_{n}}]^{\star}
-\widetilde{\pi}_{k_{n}}D_{X}C_{X}^{-1}[\widetilde{\pi}_{k_{n}}]^{\star}\|_{\mathcal{S}(H)}\nonumber\\
&&=
\sqrt{k_{n}}\|\widetilde{\pi}^{k_{n}}(\mathcal{D}_{n}-D_{X})\mathcal{C}_{n}^{-1}[\widetilde{\pi}^{k_{n}}]^{\star}\|_{\mathcal{S}(H)}\nonumber\\
&&+
\sqrt{k_{n}}\|\widetilde{\pi}^{k_{n}}D_{X}C_{X}^{-1}\left[C_{X}\mathcal{C}_{n}^{-1}\mathcal{C}_{n}
-C_{X}C_{X}^{-1}\mathcal{C}_{n}\right]\mathcal{C}_{n}^{-1}[\widetilde{\pi}_{k_{n}}]^{\star}\|_{\mathcal{S}(H)}\nonumber\\
&&\leq \sqrt{k_{n}} C_{k_{n}}^{-1}\left[\|D_{X}-\mathcal{D}_{n}
\|_{\mathcal{S}(H)}+\|D_{X}C_{X}^{-1}\|_{\mathcal{L}(H)}
\|C_{X}-\mathcal{C}_{n}\|_{\mathcal{S}(H)}\right]\nonumber\\
&&\leq \sqrt{k_{n}}\Lambda_{k_{n}}\left[\|D_{X}-\mathcal{D}_{n}
\|_{\mathcal{S}(H)}+\|D_{X}C_{X}^{-1}\|_{\mathcal{L}(H)}
\|C_{X}-\mathcal{C}_{n}\|_{\mathcal{S}(H)}\right]
\label{iq}\\
&&\leq K\sqrt{k_{n}}\Lambda_{k_{n}}\left[
\|C_{X}-\mathcal{C}_{n}\|_{\mathcal{S}(H)}+\|D_{X}-\mathcal{D}_{n}
\|_{\mathcal{S}(H)}\right]\to_{a.s.} 0,\quad n\to \infty,\nonumber
\end{eqnarray}  \noindent for $\|\rho\|_{\mathcal{L}(H)}\leq K,$ $K\geq 1.$  Then, $$\|\widetilde{\rho}_{k_n}-\widetilde{\pi}^{k_{n}}\rho [\widetilde{\pi}^{k_{n}}]^{\star}\|_{1}\to^{a.s.} 0,\quad n\rightarrow \infty .$$

\noindent \textbf{(ii)}   Under \textbf{Assumptions A1--A2},
from Theorem 1,
\begin{eqnarray}
& &\left\| \mathcal{C}_{n} - C_{X} \right\|_{\mathcal{S}(H)} = \mathcal{O} \left(\left(\frac{\ln(n) }{n} \right)^{1/2} \right)~a.s.,
\nonumber\\
& &\left\| \mathcal{D}_n - D_{X} \right\|_{\mathcal{S}(H)} = \mathcal{O} \left(\left(\frac{\ln(n) }{n} \right)^{1/2} \right)~a.s.\nonumber
\end{eqnarray}
  Hence, from  equation (\ref{iq}),  as $n\to \infty,$
\begin{eqnarray}
&&\|\widetilde{\pi}^{k_{n}}\mathcal{D}_{n}\mathcal{C}_{n}^{-1}[\widetilde{\pi}^{k_{n}}]^{\star}-\widetilde{\pi}^{k_{n}}D_{X}C_{X}^{-1}[\widetilde{\pi}^{k_{n}}]^{\star}\|_{1}\to_{a.s.} 0.
\label{eqrepetida}
\end{eqnarray}
Let us now consider
 \begin{eqnarray}&&\|\widetilde{\rho}_{k_n}-\rho\|_{1}\leq \|\widetilde{\rho}_{k_n}-\widetilde{\pi}^{k_{n}}\rho [\widetilde{\pi}^{k_{n}}]^{\star}\|_{1}+\|\widetilde{\pi}^{k_{n}}\rho [\widetilde{\pi}^{k_{n}}]^{\star}-\rho\|_{1}.\label{eqtrishb}
\end{eqnarray}
\noindent From equation (\ref{eqrepetida}), the first term
at the right-hand side of  inequality (\ref{eqtrishb}) converges a.s. to zero. From
Lemma 1, $\widetilde{\pi}^{k_{n}}\rho [\widetilde{\pi}^{k_{n}}]^{\star}$ converges a.s. to $\rho,$ in $\mathcal{L}(H),$  as $n\to \infty.$ Since $\rho $ is trace operator, Dominated Covergence Theorem leads to $\|\widetilde{\pi}^{k_{n}}\rho [\widetilde{\pi}^{k_{n}}]^{\star}-\rho\|_{1}\to_{a.s.} 0,$  $n\to \infty,$ and  $$\|\widetilde{\rho}_{k_n}-\rho \|_{1}\to^{a.s.} 0,\quad n\to \infty.$$

\subsection*{Strong-consistency of the plug-in predictor}
\begin{corollary} 
Under the conditions of Theorem 2\textbf{(ii)},
\label{proposition6}
\begin{equation}
\|\widetilde{\rho}_{k_n}(X_{n-1})-\rho(X_{n-1})\|_{H} \to_{a.s.} 0,\quad n\rightarrow \infty.\label{eqfr}
\end{equation}

\end{corollary}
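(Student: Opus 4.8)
The plan is to deduce the pointwise (in $H$) convergence of the predictor from the \emph{operator-norm} convergence of the estimator, which is itself a free by-product of Theorem \ref{proposition5}\textbf{(ii)}. The starting observation is the standard chain of norm inequalities valid for any trace operator $\mathcal{K}$ on $H$, namely $\|\mathcal{K}\|_{\mathcal{L}(H)}\leq \|\mathcal{K}\|_{\mathcal{S}(H)}\leq \|\mathcal{K}\|_{1}$. Since $\rho$ is a trace operator under the hypotheses of part \textbf{(ii)} and $\widetilde{\rho}_{k_n}$ is finite-rank, the difference $\widetilde{\rho}_{k_n}-\rho$ is a trace operator, so equation (\ref{mreq1c}) yields $\|\widetilde{\rho}_{k_n}-\rho\|_{\mathcal{L}(H)}\to_{a.s.}0$ as $n\to\infty$.

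Next I would separate the random argument from the random operator by the defining property of the uniform norm. For each $n$,
\begin{equation}
\|\widetilde{\rho}_{k_n}(X_{n-1})-\rho(X_{n-1})\|_{H}\leq \|\widetilde{\rho}_{k_n}-\rho\|_{\mathcal{L}(H)}\,\|X_{n-1}\|_{H}, \nonumber
\end{equation}
so it remains only to show that the factor $\|X_{n-1}\|_{H}$ does not destroy the convergence just obtained. The control of $\|X_{n-1}\|_{H}$ is where \textbf{Assumption A1} (included among the hypotheses of part \textbf{(ii)}) enters: by stationarity of the ARH(1) process, each $X_{n-1}$ is distributed as $X_{0}$, whence $\|X_{n-1}\|_{H}<M$ a.s.\ for every $n$. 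Taking the countable intersection over $n\geq 1$ of these probability-one events, together with the probability-one event on which the operator-norm convergence holds, produces a single set of probability one on which $\|\widetilde{\rho}_{k_n}(X_{n-1})-\rho(X_{n-1})\|_{H}\leq M\,\|\widetilde{\rho}_{k_n}-\rho\|_{\mathcal{L}(H)}\to 0$, which is exactly (\ref{eqfr}).

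The only genuine subtlety I anticipate is the joint handling of the two sources of randomness: the estimator $\widetilde{\rho}_{k_n}$ is built from the sample $X_{0},\dots,X_{n-1}$, that is, from the very same $X_{n-1}$ at which it is then evaluated, so the two factors in the bound above are strongly dependent. This coupling is harmless here precisely because the estimation error is measured in the uniform norm, $\|\widetilde{\rho}_{k_n}-\rho\|_{\mathcal{L}(H)}=\sup_{\|x\|_{H}\leq 1}\|(\widetilde{\rho}_{k_n}-\rho)(x)\|_{H}$, so the displayed inequality holds pathwise for whichever vector $X_{n-1}(\omega)$ actually occurs, and no independence need be invoked. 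It is worth emphasizing that the uniform almost-sure bound $M$ furnished by \textbf{Assumption A1} is what makes this last step immediate; under the weaker fourth-moment hypothesis of part \textbf{(i)} one would instead have to control the almost-sure growth of $\|X_{n-1}\|_{H}$ (for instance by a Borel--Cantelli argument weighed against the convergence rate behind (\ref{mreq1})) before drawing the same conclusion.
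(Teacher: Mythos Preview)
Your proof is correct and follows essentially the same route as the paper: both derive $\|\widetilde{\rho}_{k_n}-\rho\|_{\mathcal{L}(H)}\to_{a.s.}0$ from the trace-norm result, bound the prediction error by $\|\widetilde{\rho}_{k_n}-\rho\|_{\mathcal{L}(H)}\,\|X_{n-1}\|_{H}$, and control the second factor via \textbf{Assumption A1}. The only cosmetic difference is that the paper packages the a.s.\ bound on $\|X_{n-1}\|_{H}$ through the essential supremum $\|X_{0}\|_{\infty,H}=\inf\{c:\,P(\|X_{0}\|_{H}>c)=0\}$, whereas you invoke stationarity to transfer the bound $M$ from $X_{0}$ to $X_{n-1}$ and then take a countable intersection; both arrive at the same deterministic constant multiplying the vanishing operator norm.
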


\begin{proof}  Let $\|X_{0}\|_{\infty, H}= \inf\left\{
c;\ P(\|X_{0}\|_{H}>c)=0\right\}<\infty,$ under \textbf{Assumption A1}.
From Theorem 2\textbf{(ii)}, we then have    
\begin{eqnarray}&&
\|\widetilde{\rho}_{k_n}-\rho\|_{\mathcal{L}(H)}\to^{a.s.} 0,\quad
n\to \infty,\quad \mbox{and}\label{eqcor}\\
&&\|\widetilde{\rho}_{k_n}(X_{n-1})-\rho(X_{n-1})\|_{H}\leq \|\widetilde{\rho}_{k_n}-\rho\|_{\mathcal{L}(H)}\|X_{0}\|_{\infty, H}\to^{a.s.} 0, \ n\to \infty.\nonumber
\end{eqnarray} 

\end{proof}

\subsection*{Proof of Lemma 2}  
 Under \textbf{Assumption A3},  $\rho^{\star}\rho,$  
$[\mathcal{D}_{n}\mathcal{C}_{n}^{-1}]^{\star}[\mathcal{D}_{n}\mathcal{C}_{n}^{-1}],$
$\rho\rho^{\star}$ and \linebreak $[\mathcal{D}_{n}\mathcal{C}_{n}^{-1}][\mathcal{D}_{n}\mathcal{C}_{n}^{-1}]^{\star}$ are  self-adjoint compact operators, admitting the following diagonal spectral series representations in $H:$
\begin{eqnarray}
\rho^{\star}\rho &\underset{H}{=}&\sum_{j=1}^{\infty}|\rho_{j}|^{2}
\psi_{j}\otimes \psi_{j}\quad [\mathcal{D}_{n}\mathcal{C}_{n}^{-1}]^{\star}[\mathcal{D}_{n}\mathcal{C}_{n}^{-1}]\underset{H}{=}
\sum_{j=1}^{n}|\widehat{\rho}_{n,j}|^{2}\psi_{n,j}\otimes \psi_{n,j}\nonumber\\ \label{sdrt}\\
\rho\rho^{\star}&\underset{H}{=}&\sum_{j=1}^{\infty}|\rho_{j}|^{2}
\widetilde{\psi}_{j}\otimes\widetilde{\psi}_{j}\quad \mathcal{D}_{n}\mathcal{C}_{n}^{-1}[\mathcal{D}_{n}\mathcal{C}_{n}^{-1}]^{\star}
\underset{H}{=}\sum_{j=1}^{n}|\widehat{\rho}_{n,j}|^{2}\widetilde{\psi}_{n,j}\otimes\widetilde{\psi}_{n,j}.\nonumber\\\label{dsdrho2}
\end{eqnarray}
 From  (\ref{sdrt}),  applying   triangle inequality,
\begin{eqnarray}
&&\|\rho^{\star}\rho (\psi_{n,j})-|\rho_{j}|^{2}\psi_{n,j}\|_{H}\leq  \|\rho^{\star}\rho (\psi_{n,j})-
[\mathcal{D}_{n}\mathcal{C}_{n}^{-1}]^{\star}[\mathcal{D}_{n}\mathcal{C}_{n}^{-1}](\psi_{n,j})\|_{H}
\nonumber\\
&&+\|[\mathcal{D}_{n}\mathcal{C}_{n}^{-1}]^{\star}[\mathcal{D}_{n}\mathcal{C}_{n}^{-1}](\psi_{n,j})-|\rho_{j}|^{2}\psi_{n,j}\|_{H}\nonumber\\
&&\leq 2\|\rho^{\star}\rho-[\mathcal{D}_{n}\mathcal{C}_{n}^{-1}]^{\star}[\mathcal{D}_{n}\mathcal{C}_{n}^{-1}]\|_{\mathcal{L}(H)}.
\label{eqdif}
\end{eqnarray}

On the other hand, \begin{eqnarray}
&&\|\psi_{n,j}-\psi_{n,j}^{\prime }\|_{H}^{2}=
\sum_{l=1}^{\infty}\left[\left\langle \psi_{n,j},\psi_{l} \right\rangle_{H}-\mbox{sgn}\left\langle \psi_{n,j} , \psi_{l} \right\rangle_{H}\left\langle \psi_{j}, \psi_{l}\right\rangle_{H}\right]^{2}
\nonumber\\
&&=\sum_{l\neq j}\left[\left\langle \psi_{n,j},\psi_{l} \right\rangle_{H}\right]^{2}+\left[\left\langle \psi_{n,j},\psi_{j} \right\rangle_{H}-\mbox{sgn}\left\langle \psi_{n,j} , \psi_{j} \right\rangle_{H}\right]^{2}\nonumber\\
&&=\sum_{l\neq j}\left[\left\langle \psi_{n,j},\psi_{l} \right\rangle_{H}\right]^{2}+\left[1-\left|\left\langle \psi_{n,j},\psi_{j} \right\rangle_{H}\right|\right]^{2}\nonumber\\
&&=\sum_{l\neq j}\left[\left\langle \psi_{n,j},\psi_{l} \right\rangle_{H}\right]^{2}+\sum_{l=1}^{\infty}\left[\left\langle \psi_{n,j},\psi_{l} \right\rangle_{H}\right]^{2}-2\left|\left\langle \psi_{n,j},\psi_{j} \right\rangle_{H}\right|+\left|\left\langle \psi_{n,j},\psi_{j} \right\rangle_{H}\right|^{2}\nonumber\\
&&\leq 2\sum_{l\neq j}\left[\left\langle \psi_{n,j},\psi_{l} \right\rangle_{H}\right]^{2}.
\label{eqdif2}
\end{eqnarray}
Furthermore,
\begin{eqnarray}
&&\|\rho^{\star}\rho (\psi_{n,j})-|\rho_{j}|^{2}\psi_{n,j}\|_{H}^{2}=
\sum_{l=1}^{\infty}\left[\left\langle \psi_{n,j},|\rho_{l}|^{2}\psi_{l}\right\rangle_{H}-\left\langle \psi_{n,j},|\rho_{j}|^{2}\psi_{l}\right\rangle_{H}\right]^{2}\nonumber\\
&&\geq \min_{l\neq j}\left||\rho_{l}|^{2}-|\rho_{j}|^{2}\right|^{2}\sum_{l\neq j}\left[\left\langle \psi_{n,j},\psi_{l} \right\rangle_{H}\right]^{2}\nonumber\\
&&\geq \min_{l\neq j}\left||\rho_{l}|^{2}-|\rho_{j}|^{2}\right|^{2}\frac{1}{2}\|\psi_{n,j}-\psi_{n,j}^{\prime }\|_{H}^{2}\nonumber\\
&&\geq \alpha_{j}^{2}\frac{1}{2}\|\psi_{n,j}-\psi_{n,j}^{\prime }\|_{H}^{2},\label{eqdif3}
\end{eqnarray}
\noindent where $\alpha_{1}=(|\rho_{1}|^{2}-|\rho_{2}|^{2}),$ and 
\begin{eqnarray}
&&\alpha_{j}=\min\left(|\rho_{j-1}|^{2}-|\rho_{j}|^{2},|\rho_{j}|^{2}-|\rho_{j+1}|^{2}\right),\quad j\geq 2.\label{alphaj}
\end{eqnarray}

From equations (\ref{eqdif}) and  (\ref{eqdif3}), we have
\begin{eqnarray}
&&\|\psi_{n,j}-\psi_{n,j}^{\prime }\|_{H}\leq 
a_{j}\|\rho^{\star}\rho-
[\mathcal{D}_{n}\mathcal{C}_{n}^{-1}]^{\star}[\mathcal{D}_{n}\mathcal{C}_{n}^{-1}]\|_{\mathcal{L}(H)},\label{eqdif4}
\end{eqnarray}
\noindent where  $a_{1}=2\sqrt{2}(|\rho_{1}|^{2}-|\rho_{2}|^{2})^{-1},$ and 
\begin{eqnarray}
&& a_{j}=2\sqrt{2}\max\left[\left(|\rho_{j-1}|^{2}-|\rho_{j}|^{2}\right)^{-1},\left(|\rho_{j}|^{2}-|\rho_{j+1}|^{2}\right)^{-1}\right].
\label{alphajb}
\end{eqnarray}
  
  In a similar way, considering the operators $\rho\rho^{\star}$ and 
  $\widehat{\rho}_{k_{n}}\widehat{\rho}^{\star}_{k_{n}}$ instead of 
  $\rho^{\star}\rho$ and $\widehat{\rho}^{\star}_{k_{n}}\widehat{\rho}_{k_{n}},$ respectively, we can obtain
  \begin{eqnarray}
&&\|\widetilde{\psi}_{n,j}-\widetilde{\psi}_{n,j}^{\prime }\|_{H}\leq 
a_{j}\|\rho\rho^{\star}-[\mathcal{D}_{n}\mathcal{C}_{n}^{-1}][\mathcal{D}_{n}
\mathcal{C}_{n}^{-1}]^{\star}
\|_{\mathcal{L}(H)}.\label{eqdif4b}
\end{eqnarray}
  
From equations (\ref{eqdif4})--(\ref{eqdif4b}), 
\begin{eqnarray}&&\sup_{1\leq j\leq k_{n}}\|\psi_{n,j}-\psi_{n,j}^{\prime }\|_{H}\leq 2\sqrt{2}\Lambda_{k_{n}}^{\rho}
\|\rho^{\star}\rho-[\mathcal{D}_{n}\mathcal{C}_{n}^{-1}]^{\star}[\mathcal{D}_{n}\mathcal{C}_{n}^{-1}]\|_{\mathcal{L}(H)}\nonumber\\
&&\sup_{1\leq j\leq k_{n}}\|\widetilde{\psi}_{n,j}-\widetilde{\psi}_{n,j}^{\prime }
\|_{H}\leq 2\sqrt{2}\Lambda_{k_{n}}^{\rho}\|\rho\rho^{\star}-[\mathcal{D}_{n}\mathcal{C}_{n}^{-1}][\mathcal{D}_{n}\mathcal{C}_{n}^{-1}]^{\star}\|_{\mathcal{L}(H)}.\nonumber\\\label{efveeiv2}
\end{eqnarray}

 Since, under \textbf{Assumption A4},
\begin{eqnarray}&&\|\rho^{\star}\rho-[\mathcal{D}_{n}\mathcal{C}_{n}^{-1}]^{\star}[\mathcal{D}_{n}\mathcal{C}_{n}^{-1}]\|_{\mathcal{L}(H)}\leq \|\mathcal{D}_{n}\mathcal{C}_{n}^{-1}-D_{X}C_{X}^{-1}\|_{\mathcal{L}(H)}\nonumber\\
&&\|\rho\rho^{\star}-[\mathcal{D}_{n}\mathcal{C}_{n}^{-1}][\mathcal{D}_{n}
\mathcal{C}_{n}^{-1}]^{\star}\|_{\mathcal{L}(H)}\leq \|\mathcal{D}_{n}\mathcal{C}_{n}^{-1}-D_{X}C_{X}^{-1}\|_{\mathcal{L}(H)},
\label{efveeiv2hh}
\end{eqnarray}
\noindent we obtain from  Proposition 1, and  (\ref{efveeiv2})--(\ref{efveeiv2hh}), keeping in mind that, \linebreak as $n\to \infty,$ 
 $\Lambda_{k_{n}}^{\rho}=o\left(\frac{1}{M_{n}}\right),$ with $\|\mathcal{D}_{n}\mathcal{C}_{n}^{-1}-D_{X}C_{X}^{-1}\|_{\mathcal{L}(H)}=\mathcal{O}(M_{n})$ a.s.,
\begin{eqnarray}&&\sup_{1\leq j\leq k_{n}}\|\psi_{n,j}-\psi_{n,j}^{\prime }\|_{H}\to_{a.s.} 0,\quad n\to \infty \nonumber\\
&&\sup_{1\leq j\leq k_{n}}\|\widetilde{\psi}_{n,j}-\widetilde{\psi}_{n,j}^{\prime }
\|_{H}\to_{a.s.} 0,\quad n\to \infty. \nonumber
\end{eqnarray}

\subsection*{Proof of Theorem 3}

Let us consider
\begin{eqnarray}
&&\sup_{x\in H;\ \|x\|_{H}\leq 1}\|\widehat{\rho}_{k_{n}}(x)-\rho(x)\|_{H}\leq \sup_{x\in H;\ \|x\|_{H}\leq 1}\left(\|\widehat{\rho}_{k_{n}}
\widetilde{\Pi}^{k_{n}}(x)-\rho\Pi^{k_{n}}(x)\|_{H}\right.
\nonumber\\
&&\left.+
\|\rho\Pi^{k_{n}}(x)-\rho\widetilde{\Pi}^{k_{n}}(x)\|_{H}
+\|\rho\widetilde{\Pi}^{k_{n}}(x)-\rho(x)\|_{H}\right)\nonumber\\
&&= \sup_{x\in H;\ \|x\|_{H}\leq 1} \left( a_{n}(x)+b_{n}(x)+c_{n}(x)\right)\nonumber\\
&&\leq \sup_{x\in H;\ \|x\|_{H}\leq  1}a_{n}(x)+\sup_{x\in H;\ \|x\|_{H}\leq 1}b_{n}(x)+\sup_{x\in H;\ \|x\|_{H}\leq 1}c_{n}(x),
\label{eq1pmt}
\end{eqnarray}
\noindent where $\widetilde{\Pi}^{k_{n}}$ denotes the projection operator into the subspace of $H$ generated by $\{\psi_{n,j},\ j\geq 1\},$ and $\Pi^{k_{n}}$ is the projection operator into the subspace of $H$ generated by $\{\psi_{j},\ j\geq 1\}.$

As given in Section 4 of the paper, under \textbf{Assumption A3}, 
from Lemma 4.2 on p. 103 in \cite{Bosq00} (see also Proposition 1, and equation (17) of the paper),
\begin{eqnarray}
 \sup_{j\geq 1}\left|\widehat{\rho}_{n,j}-\rho_{j}\right|
 &\leq &\|\mathcal{D}_{n}\mathcal{C}_{n}^{-1}-D_{X}C_{X}^{-1}\|_{\mathcal{L}(H)}
 \to^{a.s.} 0,  \ n\to \infty .
\label{eqemsvtotsv}
\end{eqnarray}
 
 Applying now the triangle and Cauchy--Schwarz inequalities, from equations (\ref{efveeiv2})  and (\ref{eqemsvtotsv}),  as $n\to \infty,$ 
\begin{eqnarray}&&
\sup_{x\in H;\ \|x\|_{H}\leq 1}
a_{n}(x)=\sup_{x\in H;\ \|x\|_{H}\leq 1}\|\widehat{\rho}_{k_{n}}
\widetilde{\Pi}^{k_{n}}(x)-\rho\Pi^{k_{n}}(x)\|_{H}\nonumber\\
&&\leq \sup_{x\in H;\ \|x\|_{H}\leq 1}\sum_{j=1}^{k_{n}}|\widehat{\rho}_{n,j}-\rho_{j}|\left|\left\langle x,\psi_{n,j}\right\rangle_{H}\right|\|\widetilde{\psi}_{n,j}\|_{H}
\nonumber\\
&&+\sup_{x\in H;\ \|x\|_{H}\leq 1}|\rho_{j}|\left|\left\langle x,\psi_{n,j}-\psi_{n,j}^{\prime}\right\rangle_{H}\right|\|\widetilde{\psi}_{n,j}\|_{H}
\nonumber\\
&&
+\sup_{x\in H;\ \|x\|_{H}\leq 1}|\rho_{j}|\left|\left\langle x,\psi_{n,j}^{\prime}\right\rangle_{H}\right|\|\widetilde{\psi}_{n,j}-\widetilde{\psi}_{n,j}^{\prime}\|_{H}
\nonumber\\
&&\leq \sum_{j=1}^{k_{n}}|\widehat{\rho}_{n,j}-\rho_{j}|+|\rho_{j}|\left[\|\psi_{n,j}-\psi_{n,j}^{\prime}\|_{H}+\|\widetilde{\psi}_{n,j}-\widetilde{\psi}_{n,j}^{\prime}\|_{H}
\right]\nonumber\\
&&\leq k_{n}\|\mathcal{D}_{n}\mathcal{C}_{n}^{-1}-D_{X}C_{X}^{-1}\|_{\mathcal{L}(H)}\nonumber\\
&&+k_{n}\|\rho\|_{\mathcal{L}(H)}\left[2\sqrt{2}\Lambda_{k_{n}}^{\rho}
\|\rho^{\star}\rho-[\mathcal{D}_{n}\mathcal{C}_{n}^{-1}]^{\star}[\mathcal{D}_{n}\mathcal{C}_{n}^{-1}]\|_{\mathcal{L}(H)}\right]
\nonumber\\
&&+k_{n}\|\rho\|_{\mathcal{L}(H)}\left[
2\sqrt{2}\Lambda_{k_{n}}^{\rho}\|\rho\rho^{\star}-[\mathcal{D}_{n}
\mathcal{C}_{n}^{-1}][\mathcal{D}_{n}\mathcal{C}_{n}^{-1}]^{\star}\|_{\mathcal{L}(H)}\right],\label{eqcaxas}
\end{eqnarray}
\noindent which converges a.s. to zero, under \textbf{Assumption A4} (see also equation (\ref{efveeiv2hh})),  since \begin{equation}k_{n}\Lambda_{k_{n}}^{\rho}=o\left(\frac{1}{M_{n}}\right),\quad n\to \infty,
\label{eqasconvzero}
\end{equation}
\noindent  
 with $\|\mathcal{D}_{n}\mathcal{C}_{n}^{-1}-D_{X}C_{X}^{-1}\|_{\mathcal{L}(H)}=\mathcal{O}(M_{n})$ a.s., as $n\to \infty .$
 Applying  triangle and Cauchy--Schwarz inequalities, from equation (\ref{efveeiv2}), in a similar way to (\ref{eqcaxas}), under  \textbf{Assumption A4} and (\ref{eqasconvzero}), we obtain
\begin{eqnarray}&&\sup_{x\in H;\ \|x\|_{H}\leq 1}
b_{n}(x)=\sup_{x\in H;\ \|x\|_{H}\leq 1}\|\rho\Pi^{k_{n}}(x)-
\rho\widetilde{\Pi}^{k_{n}}(x)\|_{H}
\nonumber\\
&&\leq \sup_{x\in H;\ \|x\|_{H}\leq 1}\sum_{j=1}^{k_{n}}
\|x\|_{H}\|\psi_{n,j}^{\prime}-\psi_{n,j}\|_{H}|\rho_{j}|\|\widetilde{\psi}^{\prime}_{n,j}\|_{H}\nonumber\\
&&+\sup_{x\in H;\ \|x\|_{H}\leq 1}\|x\|_{H} \|\|\psi_{n,j} \|_{H}\left\|\rho\left(\widetilde{\psi}^{\prime}_{n,j}-\widetilde{\psi}_{n,j}\right) \right\|_{H} \nonumber\\
&& \leq \sum_{j=1}^{k_{n}}\|\psi_{n,j}^{\prime}-\psi_{n,j}\|_{H}|\rho_{j}|+\|\rho\|_{\mathcal{L}(H)}\|\widetilde{\psi}^{\prime}_{n,j}-\widetilde{\psi}_{n,j}\|_{H}\nonumber\\
&& \leq \|\rho\|_{\mathcal{L}(H)}\sum_{j=1}^{k_{n}}\|\psi_{n,j}^{\prime}-\psi_{n,j}\|_{H}+\|\widetilde{\psi}^{\prime}_{n,j}-\widetilde{\psi}_{n,j}\|_{H}\to_{a.s.} 0,\ n\to \infty.
\label{eqcaxas2}
\end{eqnarray}

In a similar way to the proof of  Lemma 1, from  (\ref{efveeiv2}), under  \textbf{Assumption A4} and (\ref{eqasconvzero}), as $n\to \infty,$
\begin{eqnarray}
&&\sup_{x\in H;\ \|x\|_{H}\leq 1}c_{n}(x)=\sup_{x\in H;\ \|x\|_{H}\leq 1}\|\rho\widetilde{\Pi}^{k_{n}}(x)-\rho(x)\|_{H}\label{eqcaxas2b}\\
&&\leq 
2\left\|\rho
\right\|_{\mathcal{L}(H)}\sum_{j=1}^{k_{n}}
\|\psi_{n,j}-\psi_{n,j}^{\prime}\|_{H}
\nonumber\\
&&+\sup_{x\in H;\ \|x\|_{H}\leq 1}\left\|\sum_{j=k_{n}+1}^{\infty}\left\langle \rho(x),
\psi_{n,j}^{\prime}\right\rangle_{H}\psi_{n,j}^{\prime}\right\|_{H}
\to_{a.s} 0.\nonumber
\end{eqnarray}
 \noindent 
From equations (\ref{eqcaxas})--(\ref{eqcaxas2b}),   we obtain the desired result.
\vspace{-0.20cm}
\section*{Acknowledgments}
This work has been supported in part by project MTM2015--71839--P (co-funded by Feder funds),
of the DGI, MINECO, Spain.
\vspace{-0.20cm} 
\section*{References}

\end{document}